\long\def\@savemarbox#1#2{\global\setbox#1\vtop{\hsize\marginparwidth 
  \@parboxrestore\tiny\raggedright #2}}
\renewcommand*{\backref}[1]{}
\renewcommand*{\backrefalt}[4]{
  \ifcase #1
  [No citations.]
  \or [#2]
  \else [#2]
  \fi }
   \def\MR#1{}
\numberwithin{equation}{section}
\theoremstyle{plain}
\newtheorem{theorem}[equation]{Theorem}
\newtheorem{lemma}[equation]{Lemma}
\newtheorem{corollary}[equation]{Corollary}
\newtheorem{proposition}[equation]{Proposition}
\newtheorem*{namedtheorem}{\theoremname}
\newcommand{\theoremname}{testing}
\theoremstyle{definition}
\newtheorem{definition}[equation]{Definition}
\newtheorem{example}[equation]{Example}
\newtheorem{remark}[equation]{Remark}
\newcommand{\from}{\colon} 
\newcommand{\HH}{{\mathbb{H}}}
\newcommand{\RR}{{\mathbb{R}}}
\newcommand{\RP}{\mathbb{R}P}
\newcommand{\G}{\mathcal{G}}
\newcommand{\Hgraph}{\mathcal{H}}
\newcommand{\calT}{\mathfrak{T}}
\newcommand{\M}{M}
\newcommand{\Mmu}{\M^{\geq \mu}}
\newcommand{\vol}{\operatorname{vol}}
\newcommand{\bdy}{\partial}
\newcommand{\tw}{\operatorname{tw}}
\newcommand{\dist}{\mathrm{dist}}
\newcommand{\cng}{\operatorname{cng}}
\newcommand{\poly}{\mathrm{poly}}
\newcommand{\oracle}{\mathrm{Or}}
\newcommand{\refthm}[1]{Theorem~\ref{Thm:#1}}
\newcommand{\reflem}[1]{Lemma~\ref{Lem:#1}}
\newcommand{\refprop}[1]{Proposition~\ref{Prop:#1}}
\newcommand{\refcor}[1]{Corollary~\ref{Cor:#1}}
\newcommand{\refrem}[1]{Remark~\ref{Rem:#1}}
\newcommand{\refdef}[1]{Definition~\ref{Def:#1}}
\newcommand{\refsec}[1]{Section~\ref{Sec:#1}}
\newcommand{\reffig}[1]{Figure~\ref{Fig:#1}}
\title{Treewidth, crushing, and hyperbolic volume}
\author{Cl\'ement Maria}
\author{Jessica S. Purcell}
\begin{document}

\begin{abstract}
  The treewidth of a 3-manifold triangulation plays an important role in algorithmic 3-manifold theory, and so it is useful to find bounds on the treewidth in terms of other properties of the manifold. In this paper, we prove that there exists a universal constant $c$ such that any closed hyperbolic 3-manifold admits a triangulation of treewidth at most the product of $c$ and the volume. The converse is not true: we show there exists a sequence of hyperbolic 3-manifolds of bounded treewidth but volume approaching infinity.  Along the way, we prove that crushing a normal surface in a triangulation does not increase the carving-width, and hence crushing any number of normal surfaces in a triangulation affects treewidth by at most a constant multiple.
\end{abstract}

\maketitle

\section{Introduction}

This paper concerns invariants of 3-manifolds that are of interest both in geometry and in computational topology.
For computational purposes, 3-manifolds are often expressed by a \emph{triangulation}, that is by gluing a collection of tetrahedra. For example, this is true for 3-manifold software {\tt SnapPea}, developed by Weeks in the early 1980s, now maintained and distributed as {\tt SnapPy}~\cite{snappy}, and for {\tt Regina}, developed by Burton~\cite{burton04-regina}. These programs have been influential in the development of 3-manifold geometry and topology. Computational topology considers the running time of algorithms. For an algorithm that takes a triangulation as input, the running time frequently depends on some measure of the ``simplicity'' 
of the triangulation. Note, however, that a 3-manifold can have many different triangulations. Therefore, it is important to produce triangulations that are as simple as possible. Here, we will evaluate the simplicity of a triangulation by its \emph{treewidth}.

The treewidth of a triangulation is a measure of the sparsity of the gluing relations between tetrahedra; see \refdef{Treewidth}. It was first developed in graph theory~\cite{robertson86-algorithmic}, then adapted to 3-manifold triangulations. In recent years, several algorithms have been developed that are highly efficient for triangulations with low treewidth~~\cite{DBLP:conf/icalp/BurtonMS15,DBLP:conf/soda/BurtonS13}, and so we would like to find triangulations of 3-manifolds with treewidth bounded in terms of well-understood properties of the manifold. 

One property is geometry. By the geometrisation theorem proved by Perelman (\cite{perelman02, perelman03}, or see~\cite{kleiner08-perelman}), every closed orientable 3-manifold decomposes into geometric pieces, and the hyperbolic pieces are among the most prevalent and least understood. 
If a closed 3-manifold admits a hyperbolic structure, then that structure is a topological invariant of the manifold \cite{Mostow}, and so it is natural to ask if the hyperbolic geometric properties of the manifold can bound treewidth of a triangulation. 

A hyperbolic invariant that has received much attention is the hyperbolic volume. By work of J{\o}rgensen and Thurston, a hyperbolic 3-manifold $M$ that has a lower bound on injectivity radius admits a triangulation with $O(\vol(M))$ tetrahedra (\cite{Thurston:notes}, see also \cite{KobayashiRieck}). However, if we put no restrictions on injectivity radius, then no such result holds: for a sufficiently large constant $C>0$, there are infinitely many closed hyperbolic 3-manifolds with volume bounded above by $C$, and therefore a finite number of tetrahedra cannot triangulate them all. For example, such manifolds are obtained by Dehn filling a hyperbolic manifold with finite volume, using the fact that volume decreases under Dehn filling (\cite{Thurston:notes}, or see \cite[Chapter~E]{BenedettiPetronioHyperbolicGeom}). 
Nevertheless, we prove in this paper that any hyperbolic 3-manifold with bounded volume admits a triangulation with bounded treewidth.

\begin{theorem}\label{Thm:TreeWidth}
There exists a universal constant $c>0$ such that a hyperbolic 3-manifold $M$ with volume $\vol(M)$ admits a triangulation with treewidth at most $c \cdot \vol(M)$.
\end{theorem}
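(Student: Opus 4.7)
\emph{Proof plan.} Let $\mu$ be the Margulis constant. The strategy is to exploit the thick--thin decomposition: writing $N = \Mmu$ for the thick part, the complement in $M$ is a disjoint union of solid-torus Margulis tubes $T_1,\dots,T_k$ around short closed geodesics. Each tube contributes a definite amount of volume, so $k = O(\vol(M))$, while $N$ is a compact manifold with torus boundary whose injectivity radius is bounded below by $\mu$.

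First, I would triangulate $N$ using a J{\o}rgensen--Thurston-style argument (see \cite{KobayashiRieck}): a hyperbolic manifold with injectivity radius bounded below by $\mu$ admits a triangulation with $O(\vol(M))$ tetrahedra, and with care one can arrange each boundary torus to carry a fixed one-vertex, two-triangle triangulation. Since any triangulation on $n$ tetrahedra has dual graph on $n$ vertices and hence treewidth at most $n$, the resulting treewidth is $O(\vol(M))$. Next, for each $i$ I would fill $T_i$ using a \emph{layered solid torus} of Jaco--Rubinstein realising the required meridian slope against the prescribed boundary triangulation. The dual graph of a layered solid torus is a path, so its treewidth is at most $1$ regardless of how many tetrahedra are needed to realise the slope. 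Assembling the pieces gives a triangulation $\mathcal{T}$ of $M$ whose dual graph is obtained from that of $N$ by attaching $k$ paths at $k$ designated boundary tetrahedra. Extending any tree decomposition of $N$ of width $O(\vol(M))$ along each attached path inflates the width by only a universal constant per tube, so the total width is $O(\vol(M)) + O(k) = O(\vol(M))$.

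The main obstacle is producing a triangulation of $N$ that \emph{simultaneously} has $O(\vol(M))$ tetrahedra \emph{and} carries the prescribed boundary triangulation needed to attach layered solid tori. A naive subdivision to fix up the boundary can inflate the tetrahedron count, while a generic triangulation coming from bounded geometry need not have the required boundary combinatorics. This is where I expect the crushing result promised in the abstract to intervene: by realising the tori separating the thick and thin parts as normal surfaces in a controlled larger triangulation and crushing them, one can simplify the triangulation without increasing the carving-width, hence changing the treewidth by at most a constant factor. Combining this with the layered solid torus fillings then yields the universal bound $c \cdot \vol(M)$ on the treewidth.
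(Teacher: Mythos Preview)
Your outline matches the paper's approach closely: triangulate a neighbourhood of the thick part with $O(\vol(M))$ tetrahedra via the J{\o}rgensen--Thurston/Kobayashi--Rieck argument, invoke crushing to arrange one-vertex torus boundaries without increasing carving-width (hence treewidth up to a constant factor), and then Dehn fill each boundary component with a layered solid torus whose dual graph is a chain.

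Two small corrections are worth flagging. First, the claim that ``each tube contributes a definite amount of volume'' is not quite right: a Margulis tube around a geodesic of length just below $\mu$ can have arbitrarily small volume. The bound $k = O(\vol(M))$ follows instead from the combinatorics of the thick-part triangulation (each boundary torus carries at least two triangular faces, and there are only $O(\vol(M))$ tetrahedra), or equivalently from the fact that each boundary torus has a collar of definite volume in the thick part. Second, what gets crushed is not the boundary tori themselves but normal spheres and discs inside the thick-part triangulation: the point is to make the triangulation $0$-efficient in the sense of Jaco--Rubinstein, which forces exactly one vertex per boundary component. The paper's new ingredient is precisely that this crushing does not increase carving-width, so the $O(\vol(M))$ bound survives the passage to a one-vertex boundary triangulation.
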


In computer science, parameterized complexity classifies computational difficulty in terms of multiple parameters as input. Some problems that are known to require superpolynomial time in terms of the input alone, under standard computational complexity assumptions, can be solved by algorithms that are exponential in one fixed parameter, but only polynomial in the size of another. Thus they can be solved efficiently for low values of the first parameter. An important example of this from graph theory is Courcelle's theorem, which states that many graph theory problems can be decided in linear time in the treewidth of the graph~\cite{Courcelle}. Recently, Courcelle's theorem has been adapted to 3-manifold topology by Burton and Downey~\cite{DBLP:journals/jct/BurtonD17}.
Along with the rich theory of parameterized algorithms and standard dynamic programming techniques, this has led to the development of several algorithms 
in 3-manifold topology that are both theoretically and practically efficient, provided the input triangulation has small treewidth. Some of these parameterized algorithms have been implemented in the 3-manifold software {\tt Regina}~\cite{burton04-regina}, and have led to significant improvement in practical computations.

In practice, the treewidth parameter is strongly dependent on the triangulation chosen for representing a manifold, and obtaining low treewidth triangulations can be difficult; see~\cite{DBLP:conf/soda/MariaS17} for a discussion. Unfortunately, a manifold that has a simple topological or geometric description can often be represented by a triangulation that has extremely large treewidth, with no obvious combinatorial simplifications. Therefore it is important to identify triangulations of a manifold whose treewidth is bounded by topological or geometric properties of the manifold, as in \refthm{TreeWidth}.

\smallskip

We also consider the converse to \refthm{TreeWidth}, and show it does not hold. In \refthm{BddTW}, we show that there exists a sequence of closed hyperbolic manifolds with bounded treewidth and volume approaching infinity. Thus, while volume gives an upper bound on treewidth, it does not give a lower bound.

On the other hand, recent work of Husz{\'a}r, Spreer, and Wagner~\cite{HuszarSpreerWagner} implies that there is a sequence of 3-manifolds whose treewidth approaches infinity. A corollary of our result is that any such examples that are hyperbolic have volume also approaching infinity.
In fact, one family of examples is the family of small manifolds with large genus developed by Agol~\cite{Agol:Small}. For the $n$-th manifold in this family, combining work of~\cite{Agol:Small} with~\cite{HuszarSpreerWagner}, the treewidth is at least $n/2$, but the volume is of order $O(n^2)$.  It would be interesting to find a family for which volume and treewidth grow proportionally, to determine whether there is hope of improving \refthm{TreeWidth}.


\subsection{Crushing, carving-width, and treewidth}

The proofs of \refthm{TreeWidth} and \refthm{BddTW} modify triangulations using the crushing procedure developed by Jaco and Rubinstein~\cite{JacoRubinstein:0Eff} and simplified by Burton~\cite{Burton:Crushing}. In order to prove the theorems, we show in \refcor{carving-width} that crushing does not affect a different measure of
the sparsity of gluing relations of the triangulation,
namely the \emph{carving-width}; see \refdef{Carvingwidth}. 
For a 3-manifold triangulation, it is known that the carving-width is at least 2/3 the treewidth and at most four times the treewidth; see \refthm{boundcngtw}. Thus crushing any finite number of times affects treewidth by at most a multiplicative constant.

Note that a usual pipeline for practical computation in 3-manifold topology consists of first, simplifying a triangulation using efficient implementations of the crushing procedure, and then running computations. Corollaries \ref{Cor:carving-width} and \ref{Cor:CrushingTreewidth} guarantee that this approach does not affect the computational complexity of parameterized algorithms using the carving-width or treewidth as a parameter, such as~\cite{DBLP:journals/jct/BurtonD17,DBLP:conf/icalp/BurtonMS15,DBLP:conf/soda/BurtonS13}. 
Thus these results on crushing and computational complexity are important, and likely of independent interest.

\subsection{Outline}
In \refsec{Treewidth} we review results on treewidth and carving-width. Then in \refsec{Crushing}, we prove that the Jaco-Rubinstein crushing procedure does not increase the carving-width of a triangulation. To prove \refthm{TreeWidth}, we use the fact that there is a universal Margulis constant $\mu$ such that any hyperbolic manifold $M$ can be obtained from its $\mu$-thick part $\Mmu$ by hyperbolic Dehn filling; see \refsec{HyperbolicReview}. The proof begins by taking a geodesic triangulation of $\Mmu$  with $O(\vol(M))$ \emph{fat} tetrahedra, i.e.\ tetrahedra with volume bounded from below (\refsec{geometry}). Next, we describe how to proceed to the Dehn filling without increasing the treewidth of the whole triangulation (\refsec{combinatorics}). We consequently obtain a triangulation with treewidth $O(\vol(M))$, and describe an explicit algorithm to construct it. The number of tetrahedra of the triangulation depends solely on the hyperbolic volume $\vol(M)$ and the slopes of the hyperbolic Dehn surgeries.

Finally, in \refsec{cst_tw}, we prove there exists a family of closed hyperbolic 3-manifolds with unbounded volume that admits a triangulation with constant treewidth. 

\section{Triangulations, carving-width, and treewidth}\label{Sec:Treewidth}

In this section, we define several necessary terms and fix notation. 

Let $M$ be a closed 3-manifold. A {\em cell-decomposition} of $M$ is a pairwise-disjoint collection of $n$ oriented, compact, convex linear 3-cells $\Delta_1,\ldots,\Delta_n$ equipped with affine maps that identify (or ``glue together'') their faces in pairs, so that the underlying topological space is homeomorphic to $M$. The {\em dual graph} of a cell decomposition is the graph, with multiple arcs and loops, having a node for every 3-cell $\Delta_i$, and an arc $(\Delta_i,\Delta_j)$ for every face gluing between 3-cells $\Delta_i$ and $\Delta_j$.

A \emph{generalised triangulation} of $M$ is a cell-decomposition where all 3-cells are abstract tetrahedra. Its dual graph is naturally a 4-valent graph, corresponding to gluings of triangular faces. 
Generalised triangulations are widely used across major 3-manifold software packages, and they allow the representation of a rich variety of 3-manifolds using very few tetrahedra.

We also encounter \emph{ideal triangulations} in this work, which are triangulations from which vertices have been removed. A removed vertex is called an \emph{ideal vertex}. 


\begin{remark}\label{Rem:Notation}
  We will be discussing both graphs and triangulations. We will refer to \emph{nodes} and \emph{arcs} of graphs, to clearly distinguish these from the \emph{vertices} and \emph{edges} of triangulations.
\end{remark}
  
The \emph{carving-width}, also known as \emph{congestion}, is a graph parameter introduced by Seymour and Thomas~\cite{Seymour1994}.

\begin{definition}\label{Def:Carvingwidth}
Let $\G$ be a graph, possibly with loops and multiple arcs between nodes, defined on $n$ nodes, and let $T$ be an unrooted binary tree, with all internal nodes of degree 3, and with $n$ leaves.
An {\em embedding} $\pi$ of $\G$ into $T$ is an injective mapping from the nodes of $\G$ to the leaves of $T$. To every pair of endpoints $(u, v)$ of an arc in $\G$, there corresponds a unique path $p(\pi(u),\pi(v))$ in $T$, connecting leaves $\pi(u)$ and $\pi(v)$. Define the {\em congestion} of an embedding $\pi$ to be:
\[
\cng(\pi) = \max_{a \ \text{arc of} \ T} \left| \{ (u,v) \ \text{in} \ \G : p(\pi(u),\pi(v)) \ \text{contains} \ a \}\right|,
\]
where note we count a multiple arc only once in the formula. 
Here $|\cdot|$ denotes the number of elements in a set.

The {\em carving-width} $\cng(\G)$ of a graph $\G$ is the minimal congestion over all its embeddings into binary trees.
The \emph{carving-width of a cell-decomposition} $\cng(\calT)$ is the carving-width of its dual graph. Finally, we define the \emph{carving-width of a 3-manifold} $M$, denoted by $\cng(M)$, to be the minimal carving-width over all its generalised triangulations.
\end{definition}

In this article, our definition of congestion differs from the literature by counting a multiple arc only once in the tree embedding. However, for a graph dual to a triangulation of a 3-manifold, this only affects the carving-width by a constant multiple, since all dual graphs of triangulations have constant maximal degree four. Also, note that a loop arc $(u,u)$ leads to paths $p(\pi(u),\pi(u))$ of length 0 in a tree embedding, and can be disregarded when computing carving-width. 

We give an example of a tree embedding in \reffig{example}. The following additional example will be important to our applications. 

\begin{example}\label{Example:LST}
Let $M$ be a solid torus. We describe a well-known triangulation of $M$, discussed in detail by Jaco and Rubinstein~\cite{JacoRubinstein:LayeredTriang}, called a \emph{layered} triangulation or \emph{layered solid torus}.
At the core is a triangle with two sides identified to form a M\"obius band. The first tetrahedron is glued such that two of its faces glue to the core triangle, one on either side of the triangle. By gluing correctly, the result is homeomorphic to a solid torus and has boundary consisting of exactly two triangles; see~\cite{JacoRubinstein:LayeredTriang} for details. Additional tetrahedra may now be added inductively. At each step, a single tetrahedron is attached to the existing triangulation such that two of its faces are glued to the two boundary faces, covering a boundary edge (which then becomes an interior edge). The result is a solid torus with two triangular boundary faces. For any fixed slope on the torus, there exists a layered solid torus for which that slope is the meridian, i.e.\ bounds a disc; see for example~\cite[Theorem~4.1]{JacoSedgwick}.

Consider now the gluing graph of a layered solid torus. The tetrahedron at the core with its two faces identified gives a loop at the corresponding node. The other two faces are identified to a single tetrahedron, giving two arcs to the next node. Any additional nodes are connected by two nodes to the previous node, and two nodes to the next node. This forms a simple daisy chain. See \reffig{daisy}, left.
\end{example}

\begin{figure}
\centering
\import{figures/}{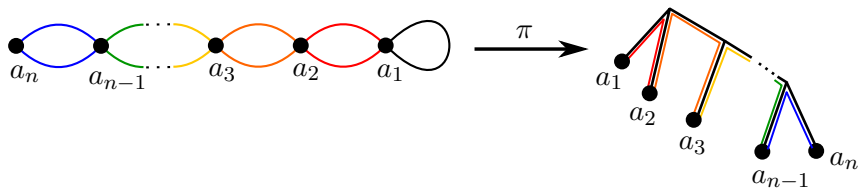}
\caption{On the left is the daisy chain graph. Shown is an embedding into a tree, on the right, indicating that the carving-width is at most two.}
\label{Fig:daisy}
\end{figure}

Note if there are only two nodes in the daisy chain graph, the carving-width is $1$. In the more general case we obtain the following.

\begin{lemma}\label{Lem:DaisyChain}
  The daisy chain graph with $n\geq 3$ nodes, arising as the dual of a layered solid torus, has carving-width two.
\end{lemma}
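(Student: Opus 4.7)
The plan is to prove both $\cng(\G) \geq 2$ and $\cng(\G) \leq 2$. Since the paper's definition of congestion counts multiple arcs only once and loops contribute paths of length zero in any tree embedding, the daisy chain graph on $n \geq 3$ nodes has the same carving-width as the simple path $P_n$ on nodes $1,2,\ldots,n$ with arcs $\{k,k+1\}$ for $1 \leq k \leq n-1$. I will therefore work directly with $P_n$.

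For the lower bound, let $\pi$ be any embedding of $P_n$ into a binary tree $T$. The unique arc of $T$ incident to the leaf $\pi(2)$ separates $\pi(2)$ from every other leaf, so both graph arcs $\{1,2\}$ and $\{2,3\}$ must route through it. Hence $\cng(\pi) \geq 2$, giving $\cng(\G) \geq 2$.

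For the upper bound, I will exhibit an explicit embedding generalising the one pictured on the right of \reffig{daisy}. For $n = 3$, take the unique binary tree with three leaves assigned to $\pi(1), \pi(2), \pi(3)$, and check congestion directly. For $n \geq 4$, take a caterpillar binary tree with internal spine $I_1 - I_2 - \cdots - I_{n-2}$, where $I_1$ is adjacent to the leaves $\pi(1), \pi(2)$, each intermediate $I_k$ (for $2 \leq k \leq n-3$) is adjacent to $\pi(k+1)$, and $I_{n-2}$ is adjacent to $\pi(n-1), \pi(n)$. Each spine arc $I_k - I_{k+1}$ separates the leaves $\{\pi(1), \ldots, \pi(k+1)\}$ from the rest, so is crossed only by the single graph arc $\{k+1, k+2\}$. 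Each leaf arc of $T$ separates one graph node from all the others, and so is crossed by at most the two path arcs incident to that node. Thus every arc of $T$ carries congestion at most $2$.

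The only point that requires care is the bookkeeping at the two spine ends $I_1$ and $I_{n-2}$, which each host two leaves, but a brief case analysis confirms the bound of $2$ throughout. I do not expect any substantive obstacle beyond careful tracking, and combining the two inequalities yields $\cng(\G) = 2$.
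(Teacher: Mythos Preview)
Your proof is correct and follows essentially the same approach as the paper's: the caterpillar tree you describe is exactly the tree the paper builds inductively (at each step the paper attaches two new leaves to the last leaf, which produces the spine $I_1,\dots,I_{n-2}$ with the same leaf assignments), and your lower-bound argument at the leaf $\pi(2)$ is just an unpacking of the paper's observation that the carving-width is at least the maximal degree after identifying multiple arcs. Your preliminary reduction to the simple path $P_n$ is a nice clarifying step that the paper leaves implicit.
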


\begin{proof}
The carving-width is at least two, because the carving-width of a graph is at least the maximal degree of a node after identifying multiple arcs, which is two for the daisy chain.

We now show that the carving width is at most two. 
Number the nodes of the daisy chain linearly as in \reffig{daisy}, by $a_1, a_2, \dots, a_{n-1}, a_n$, where the node $a_1$ has a loop and two arcs running to $a_2$, the node $a_2$ has an additional two arcs running to $a_3$, etc., and $a_n$ is 2-valent (two of its faces lie on the boundary of the layered solid torus).

Form an unrooted binary tree with $n$ nodes as follows. Start with the unrooted binary tree $T_3$ with three leaves and a single 3-valent node. Label nodes $b_1$, $b_2$, $b_3$ in a cyclic manner. Note the two paths from $b_1$ to $b_2$ and from $b_2$ to $b_3$ run over the arc connecting the leaf $b_2$ exactly twice, and other arcs exactly once. Now inductively increase the size of the tree until it has $n$ leaves. Given an unrooted tree $T_k$ with $k$ leaves labelled $b_1, \dots, b_k$ in a cyclic manner, form $T_{k+1}$ by attaching two new leaves to the $k$-th leaf, making it a 3-valent node, and label all leaves as in $T_k$ except the two new leaves, labelled $b_k$ and $b_{k+1}$, such that the labelling on $T_{k+1}$ is still cyclic. See \reffig{daisy}, right.

Note that paths from $b_i$ to $b_{i+1}$ for $i<k-1$ are identical in $T_k$ and $T_{k+1}$. For $i=k-1$ in $T_{k+1}$, the path from $b_{k-1}$ to $b_k$ runs over the arc connecting the leaf $b_{k-1}$, along the arc that connects the new 3-valent node in $T_{k+1}$, and then along the arc connecting the leaf $b_k$. The path from $b_k$ to $b_{k+1}$ runs over the two new arcs. Thus by induction, all paths from $b_i$ to $b_{i+1}$, $1\leq i\leq k$, run over arcs connecting leaves $b_2, \dots, b_k$ exactly twice, and all other arcs exactly once. Continue until $k+1=n$, and map each $a_i$ to $b_i$ in $T_n$. Thus the carving-width is at most two. 
\end{proof}

A fundamental property of carving-width is that it decreases when taking {\em immersions}. 

\begin{definition}\label{Def:Immersion}
  Let $\G$ be a graph, with adjacent arcs $(u,v)$ and $(v,w)$. A \emph{lifting} of $uvw$ consists of removing all arcs $(u,v)$ and $(v,w)$ from $\G$, and adding arc $(u,w)$. An \emph{immersion} of $\G$ is a graph $\Hgraph$ that can be obtained from $\G$ by a sequence of liftings, and arc and node removals.

  Equivalently, $\Hgraph$ is an immersion of $\G$ if there exists a mapping of the nodes of $\Hgraph$ to the nodes of $\G$ where every arc $(u,v)$ is sent to a path from $\pi(u)$ to $\pi(v)$ in $\G$ such that distinct arcs in $\Hgraph$ lead to arc-disjoint paths in $\G$.
\end{definition}

The following is standard and follows from the definitions.

\begin{lemma}\label{Lem:Immersion}
If $\Hgraph$ is an immersion of $\G$, then
\[\cng(\G) \geq \cng(\Hgraph).\]
\end{lemma}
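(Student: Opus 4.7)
The plan is to verify that $\cng$ is monotone non-increasing under each of the three atomic operations used to build an immersion — arc removal, node removal, and lifting — and then chain these via induction on the length of the operation sequence.

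Start with an optimal tree embedding $\pi$ of $\G$ into a binary tree $T$, realising $\cng(\G)$. For an \emph{arc removal}, keep the same $\pi$; each tree arc now receives a subset of its previous set of contributing pairs, so congestion only drops. For a \emph{node removal}, say we delete a node $v$ of $\G$. Erase the leaf $\pi(v)$ and its incident tree arc from $T$; the parent of $\pi(v)$ becomes $2$-valent and can be suppressed, merging its two remaining incident tree arcs into a single arc $a'$. Any pair in $\G \setminus \{v\}$ whose path in the new tree crosses $a'$ previously crossed \emph{both} of the merged arcs, so the count at $a'$ is bounded by the count at either original arc, and pairs involving $v$ are simply discarded.

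For a \emph{lifting} of $uvw$ to $(u,w)$, keep the embedding $\pi$ on the lifted graph $\G'$, which has the same node set. Because $T$ is a tree, the unique path $p(\pi(u),\pi(w))$ is contained in $p(\pi(u),\pi(v)) \cup p(\pi(v),\pi(w))$. Thus for any tree arc $a$, if $a$ appears on $p(\pi(u),\pi(w))$ then $a$ already appeared on at least one of the two original paths. The lifting removes the pairs of endpoints $(u,v)$ and $(v,w)$ from the arc set and adds a single pair $(u,w)$; with the convention that parallel arcs are counted once, the contribution to the count at $a$ cannot increase (a pre-existing parallel arc $(u,w)$ was already counted exactly once, and if $a$ lies on $p(\pi(u),\pi(w))$ it was counted via at least one of the two removed pairs). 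Hence $\cng(\G') \leq \cng(\pi) = \cng(\G)$.

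Combining these three monotonicity facts with induction on the number of operations in an immersion sequence from $\G$ to $\Hgraph$ yields $\cng(\Hgraph) \leq \cng(\G)$. The main bit of care is the bookkeeping in the lifting step for the non-standard ``count multiple arcs once'' convention, and the $2$-valent-node suppression when deleting leaves; both are handled by the observations above.
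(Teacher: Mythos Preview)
Your proof is correct and follows essentially the same approach as the paper: take an optimal tree embedding of $\G$, observe that for a lifting the path $p(\pi(u),\pi(w))$ is contained in $p(\pi(u),\pi(v))\cup p(\pi(v),\pi(w))$, and for removals prune the tree and note that the set of contributing pairs at each arc can only shrink. If anything, you are slightly more explicit than the paper about the ``count multiple arcs once'' convention in the lifting step and about suppressing the $2$-valent node after deleting a leaf.
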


\begin{proof}
The nodes of $\Hgraph$ are a subset of the nodes of $\G$, so any embedding of $\G$ into a tree $T$ restricts to an embedding of $\Hgraph$ into the same tree. Form $T_{\Hgraph}$ from $T$ by removing leaves of $T$ that are not the image of nodes of $\Hgraph$, and viewing arcs adjacent to remaining 2-valent nodes as a single arc. Then paths in $T_{\Hgraph}$ between nodes coming from $\Hgraph$ are obtained by taking paths in $T$ between nodes of $\Hgraph$ and removing leaves and 2-valent nodes.

If $\Hgraph$ differs from $\G$ by a lifting of $uvw$, then $\Hgraph$ contains an arc $(u,w)$ and no arcs $(u,v)$ and $(v,w)$, while $\G$ contains arcs $(u,v)$ and $(v,w)$. Note that the unique path in $T$ from $\pi(u)$ to $\pi(w)$ can be obtained by taking the union of paths from $\pi(u)$ to $\pi(v)$ and from $\pi(v)$ to $\pi(w)$ and removing all arcs traversed twice. Thus the arcs in $(\pi(u),\pi(w))$ in $T$ form a subset of those in the two paths $(\pi(u),\pi(v))$ and $(\pi(v),\pi(w))$. 
It follows that the congestion of $\Hgraph$ is at most that of $\G$. 

Finally, for any node and arc removal to convert $\G$ to $\Hgraph$, the corresponding paths will be removed from $T$, so the number of paths $p(\pi(u),\pi(v))$ running over a fixed arc $a$ in $T$, and hence $T_{\Hgraph}$, will also decrease.
\end{proof}

The carving-width of a graph is closely related to \emph{treewidth}, which plays a major role in combinatorial algorithms. The \emph{treewidth} of a graph was introduced by Robertson and Seymour~\cite{robertson86-algorithmic}, and is defined as follows.

\begin{definition}\label{Def:Treewidth}
Let $\G$ be a graph with loops and multiple arcs. A {\em tree decomposition} $(X, \{B_\tau\})$ of $\G$ consists of a tree $X$ and {\em bags} $B_\tau$ of nodes of $\G$ for each node $\tau$ of $X$, for which:
\begin{enumerate}
\item each node $u$ in $\G$ belongs to some bag $B_\tau$;
\item for every arc $(u,v)$ in $\G$, there exists a bag $B_\tau$ containing both $u$ and $v$;
\item for every node $u$ in $\G$, the bags containing $u$ form a connected subtree of $X$.
\end{enumerate}
The \emph{width} of this tree decomposition is defined as $\max_{\tau \in X} |B_\tau|-1$. The \emph{treewidth of $\G$}, denoted $\tw(\G)$, is the smallest width of any tree decomposition of $\G$. 
\end{definition}

Similarly, the treewidth of a cell-decomposition is the treewidth of its dual graph, and the treewidth of a 3-manifold is the minimal treewidth over all of its generalised triangulations.

\reffig{example} shows the dual graph of a $9$-tetrahedra triangulation of a $3$-manifold, along with a possible tree decomposition. The largest bags have size three, and so the width of this tree decomposition is $3-1=2$. 

\begin{figure}[tb]
\centering
\includegraphics{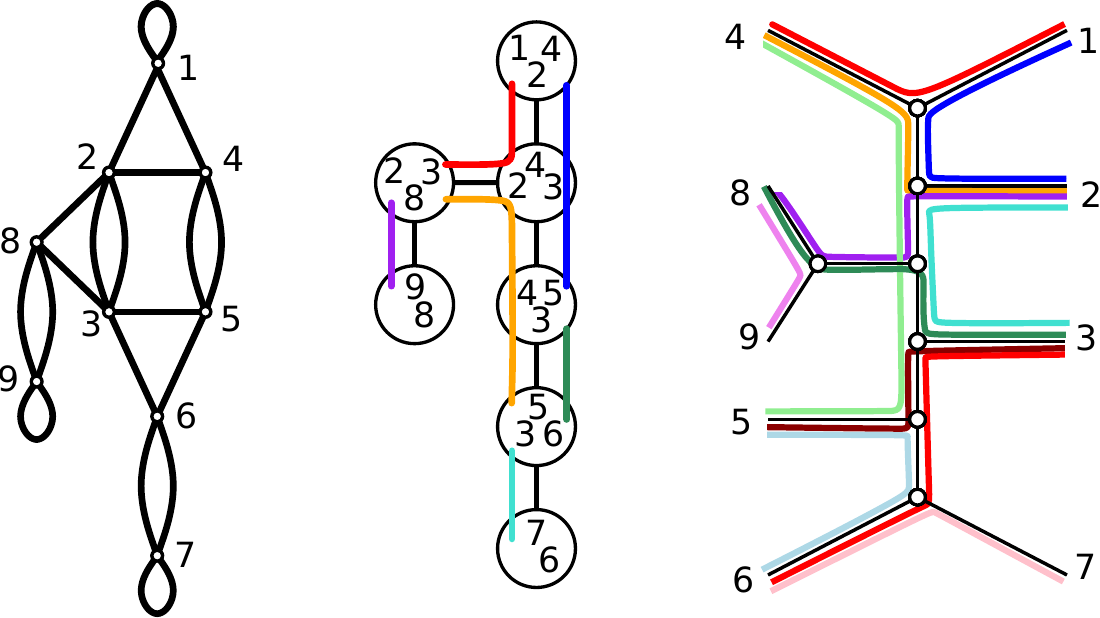}
\caption{The dual graph of a $3$-manifold triangulation (left), a tree decomposition of width $2$ (centre), and a tree embedding of width $4$ (right).}
\label{Fig:example}
\end{figure}

Finally, treewidth and carving-width are closely related, and enjoy similar properties. First, they only differ by a constant multiplicative factor:

\begin{theorem}[Theorem~1 of \cite{DBLP:journals/jct/Bienstock90}]
\label{Thm:boundcngtw}
Let $\G$ be a graph of maximal degree $d$. Then,
\[
  \frac{2}{3} (\tw(\G) +1) \leq \cng(\G) \leq d (\tw(\G) +1).
\] 
\end{theorem}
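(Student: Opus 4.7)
My plan is to prove the two inequalities separately by translating between the two tree structures, with the loss factor in each direction reflecting a structural mismatch.

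For the upper bound $\cng(\G) \leq d(\tw(\G)+1)$, I would start with an optimal tree decomposition $(X, \{B_\tau\})$ of width $k = \tw(\G)$. After standard normalizations (subdividing arcs of $X$ and splitting high-valence nodes so that $X$ has maximum degree three, without growing any bag), I would realize $X$ as a binary tree $T$ whose leaves are in bijection with the nodes of $\G$: for each node $u$, attach a pendant leaf to an internal node corresponding to some bag $B_\tau$ containing $u$. Each arc of $\G$ then has a unique path in $T$, so we obtain an embedding. The key estimate is that for any internal arc $a$ of $T$ with endpoints $\tau_1,\tau_2$, the set $S = B_{\tau_1} \cap B_{\tau_2}$ is a separator of $\G$ of size at most $k+1$ between the nodes of $\G$ on the two sides of $a$. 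Any $\G$-arc whose tree path crosses $a$ must have at least one endpoint in $S$, and since every node of $\G$ has degree at most $d$, at most $d(k+1)$ such arcs exist, giving $\cng(\pi) \leq d(\tw(\G)+1)$.

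For the lower bound $\cng(\G) \geq \tfrac{2}{3}(\tw(\G)+1)$, I would start with an optimal tree embedding $\pi$ of $\G$ into a 3-valent binary tree $T$ with congestion $c$. For each internal node $\tau$ of $T$, the three incident arcs $a_1, a_2, a_3$ each carry at most $c$ arcs of $\G$, and crucially, every $\G$-arc whose tree path passes through $\tau$ uses exactly two of the three arcs $a_i$. By double counting, the number of $\G$-arcs passing through $\tau$ is at most $3c/2$. The plan is to assign to each internal $\tau$ a bag $B_\tau \subseteq V(\G)$ of size at most $3c/2$, built from endpoints of these $\G$-arcs, and to verify the three tree decomposition axioms: every node of $\G$ must appear in some bag; both endpoints of each $\G$-arc must appear together in some bag; and the bags containing any fixed node must form a connected subtree of $T$. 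The connectivity axiom will follow from the observation that, for each node $u$ of $\G$, the internal nodes $\tau$ whose bag contains $u$ form the union of the tree paths $p(\pi(u), \pi(v))$ over neighbors $v$ of $u$, which is a connected subtree of $T$ because all these paths share the leaf $\pi(u)$.

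The main obstacle is obtaining the tight constant $2/3$ in the lower bound, i.e.\ bags of size at most $3c/2$ rather than the naive $3c$. A naive definition of $B_\tau$ as all endpoints of $\G$-arcs through $\tau$ produces bags of size up to $2 \cdot (3c/2) = 3c$, which only yields $\tw(\G)+1 \leq 3c$, a factor of two weaker than Bienstock's bound. Achieving $3c/2$ requires selecting, for each $\G$-arc through $\tau$, exactly one of its two endpoints, made in such a way that adjacency (axiom 2) and vertex coverage (axiom 1) still hold globally. The cleanest strategy I would try is to subdivide each arc of $T$ into a midpoint whose bag records only the ``near'' endpoint of each crossing $\G$-arc, and then to merge these into the internal-node bags so that every $\G$-arc receives a bag containing both endpoints exactly once, while each $\G$-arc contributes to $|B_\tau|$ through only one of its endpoints. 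Verifying that this selection simultaneously respects the three axioms is the core technical point, and it is where I expect to spend most of the work.
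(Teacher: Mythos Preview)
The paper does not prove this theorem at all: it is quoted verbatim as Theorem~1 of Bienstock's paper \cite{DBLP:journals/jct/Bienstock90} and used throughout as a black box. There is therefore no in-paper proof to compare your proposal against.

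On the merits of your sketch itself: the upper bound argument is the standard one and is correct. For the lower bound you have correctly isolated both the $3c/2$ count of $\G$-arcs passing through an internal node of $T$ and the genuine obstacle, namely that recording both endpoints of each such arc yields bags of size up to $3c$ rather than $3c/2$. Your proposed remedy --- select one endpoint per crossing arc --- points in the right direction, but as you yourself note, you have not shown how to make such a selection while simultaneously satisfying axiom~(2) (both endpoints of every $\G$-arc must share a bag) and axiom~(3) (connectivity of the bags containing a fixed node). That is the entire content of the $2/3$ constant, and your sketch leaves it open. If you want to complete this, one clean route is to go through branchwidth: the Robertson--Seymour inequality $\mathrm{bw}(\G) \le \tw(\G)+1 \le \tfrac{3}{2}\,\mathrm{bw}(\G)$ together with the straightforward bound $\mathrm{bw}(\G) \le \cng(\G)$ gives the result without the ad~hoc endpoint-selection argument.
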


Note that for dual graphs of generalised triangulations, the degree of every node is at most four, and treewidth and carving-width consequently differ by a small multiplicative constant.

The decision problem associated to computing the treewidth or carving-width of a graph is NP-complete~\cite{Arnborg:1987:CFE:37170.37183,Seymour1994}. However, both treewidth and carving-width, together with an optimal tree decomposition or embedding into a tree, can be computed in time $O(f(k) \cdot n)$ on graphs with $n$ nodes and treewidth/carving-width at most $k$~\cite{DBLP:journals/siamcomp/Bodlaender96,10.1007/3-540-40996-3_17}.

In the following, we use carving-width because of its favourable properties, and we connect it to the more widely used treewidth.

\section{Crushing triangulations does not increase carving-width}
\label{Sec:Crushing}

This section focuses on compact manifolds with or without boundary, which are the main object of study of this article. However, all results cited and introduced extend naturally to ideal triangulations.

\emph{Crushing} of triangulations is a fundamental technique introduced by Jaco and Rubinstein~\cite{JacoRubinstein:0Eff} to simplify 3-manifold triangulations. Let $\calT$ be a generalised triangulation of a 3-manifold $M$. A normal surface $S$ in $\calT$ is a properly embedded surface in $\calT$ that meets each tetrahedron in a (possibly empty) collection of curvilinear triangles and quads, as illustrated in \reffig{Cut}. A \emph{trivial surface} is a normal surface made only of triangles; it always triangulates the link of a vertex. Finally, a \emph{0-efficient triangulation} is a triangulation $\calT$ that either
\begin{itemize}
\item contains no non-trivial normal sphere if $\calT$ is closed or ideal, or
\item contains no non-trivial normal disk, if $\calT$ is bounded.
\end{itemize}

\begin{figure}
  \centering
  \includegraphics[width=5cm]{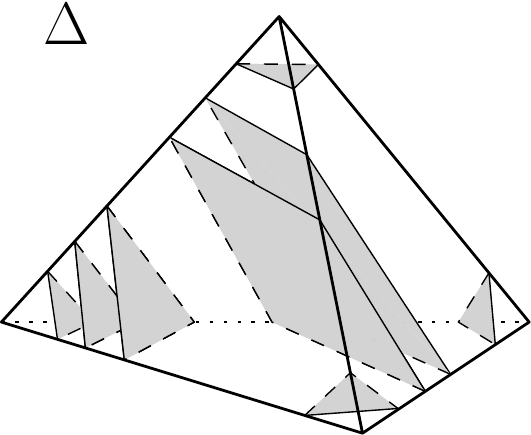}
  \caption{Tetrahedron cut by a normal surface. The intersection is a collection of disjoint normal disks (quads and triangles).}
  \label{Fig:Cut}
\end{figure}

Crushing was introduced in~\cite{JacoRubinstein:0Eff} (see also~\cite{Burton:Crushing}) as a means to construct 0-efficient triangulations.

\begin{definition}\label{Def:Crushing}
Let $S$ be a normal surface in a triangulation $\calT$. \emph{Crushing} the triangulation along $S$ consists of the following three steps:
\begin{enumerate}
\item \textbf{Cut} $\calT$ open along $S$, leading to a cell-decomposition with various cell types, presented in \reffig{Collapse}.
\item \textbf{Collapse} each copy of $S$ to a point, using the quotient topology. This gives four types of cells: tetrahedra, \emph{3-sided footballs}, \emph{4-sided footballs}, and \emph{triangular purses} (all illustrated in \reffig{Collapse}).
\item \textbf{Flatten} all non-tetrahedra cells to obtain a triangulation, i.e.\ flatten footballs into edges, and triangular purses into triangles as in \reffig{Collapse}. 
\end{enumerate}
Conclude by separating tetrahedra joined by pinched vertices and edges.
\end{definition}

Following~\cite[Lemma 3]{Burton:Crushing}, the flattening step can be performed iteratively, one non-tetrahedron cell at a time. In particular, flattening a football or a triangular purse induces the flattening of bigonal faces in the adjacent cells, hence creating temporary cells of new types: triangular purses with one or two flattened bigons (also known as bigonal pyramid and triangular pillows, respectively), and 2-sided footballs (also known as bigonal pillows).

\begin{figure}
  \centering
  \import{figures/}{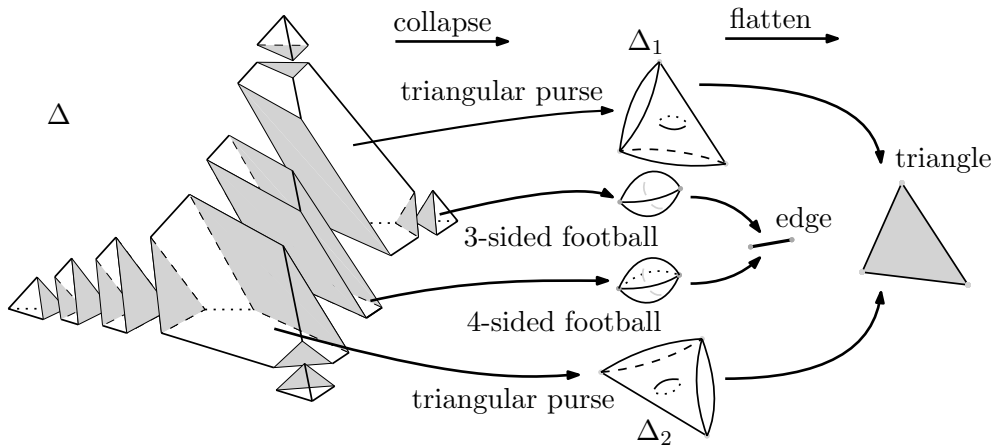}
  \caption{Cut out tetrahedron containing quads, collapsing of the cells and flattening. The collapsing produces two triangular purses, and a collection of 3 and 4-sided footballs. Footballs are flattened into edges, and triangular purses into triangles.}
  \label{Fig:Collapse}
\end{figure}

In particular, we use the following property. 

\begin{theorem}[Jaco-Rubinstein~\cite{JacoRubinstein:0Eff}, see also Burton~\cite{Burton:Crushing}]
Let $\calT$ be a generalised triangulation of a closed or bounded 3-manifold $M$. There is an algorithm to construct a finite family of triangulations $\calT_1, \ldots, \calT_n$ triangulating manifolds $M_1, \ldots, M_n$, such that $M = M_1 \# \ldots \# M_n$, and each $\calT_i$ is either 0-efficient, or can be shown to be a triangulation of $S^3$, $S^2 \times S^1$, $\RP^3$ or $L(3,1)$. The algorithm consists of finding normal spheres and disks in the original triangulation, and crushing them.  
\label{Thm:JRCrushing}
\end{theorem}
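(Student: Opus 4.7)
The plan is to describe an algorithm that iteratively finds non-trivial normal spheres (or disks, in the bounded case) and crushes them, producing 0-efficient triangulations together with a bookkeeping of small ``lost'' summands. Starting from $\calT$, I would search for a non-trivial normal sphere if $M$ is closed or ideal, or a non-trivial normal disk if $\bdy M \neq \emptyset$. The existence of an algorithm for this search is classical normal surface theory: the fundamental surface procedure, or more efficiently Jaco--Tollefson's vertex-normal-surface search, decides the existence of such a surface and exhibits one when it exists. If no non-trivial sphere or disk is found, $\calT$ is already 0-efficient and is appended to the output list. Otherwise, I crush along the surface $S$ as in \refdef{Crushing}, potentially producing several triangulations of several manifolds, and recurse on each resulting connected component.

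The topological core of the theorem is the assertion that a single crushing step decomposes the input manifold as a connected sum of the triangulated output manifolds together with additional copies of $S^3$, $S^2\times S^1$, $\RP^3$, and $L(3,1)$, the latter being the potentially ``lost'' summands. Cutting open along $S$ replaces $M$ by $M\cut S$, producing a pair of new 2-sphere boundary components for each component of $S$. Collapsing each new boundary sphere to a point caps it off by a cone on $S^2$, topologically a ball $B^3$; separating spheres contribute $S^3$ connected summands and non-separating spheres contribute $S^2\times S^1$ summands. The flattening step then carries footballs to edges and triangular purses to triangles. In generic cells this is a homeomorphism of the ambient 3-manifold, but within a piece whose every cell is a football or purse, the iterative flattening of \cite{Burton:Crushing} consumes the entire piece, and a combinatorial case analysis identifies the four exceptional topologies that can be so absorbed.

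The main obstacle, and the locus of the bulk of the work of Jaco--Rubinstein and Burton, is the careful enumeration of cell configurations produced by the cut--collapse--flatten procedure and the verification that each one-cell flattening preserves the homeomorphism type up to one of the four allowed summands. Concretely, one must analyse all intermediate cell types arising in an iterative flattening (3- and 4-sided footballs, triangular purses, bigonal pillows, and purses with one or two flattened bigons as described in \cite{Burton:Crushing}), and verify that each local flattening either is an outright homeomorphism, splits off one of the four special summands, or entirely consumes a component that itself was one of the four special manifolds. The algorithm maintains a running list of detected special summands, which is appended to the output whenever the combinatorial witnesses are observed.

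Termination is the easy part. Any non-trivial normal sphere or disk must contain at least one quadrilateral disk, since a surface consisting only of triangles is either empty or a vertex link (hence trivial). Consequently, the tetrahedra meeting a quad of $S$ are destroyed by the cut-and-collapse step, so the total tetrahedron count of the working family strictly decreases at each iteration. Since this count is a non-negative integer, the procedure halts after finitely many steps, at which point every surviving $\calT_i$ is either 0-efficient or one of the four combinatorially-recognised special triangulations, and the accumulated summands give the claimed connected sum decomposition $M = M_1\#\cdots\# M_n$.
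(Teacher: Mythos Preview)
The paper does not give its own proof of this theorem: it is stated as a cited result of Jaco--Rubinstein~\cite{JacoRubinstein:0Eff} and Burton~\cite{Burton:Crushing}, with no proof environment following the statement. There is therefore nothing in the paper to compare your proposal against.

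That said, your outline is a faithful summary of the argument in those references. The iterative search-and-crush loop, the observation that a non-trivial normal sphere or disk must contain a quadrilateral so that crushing strictly decreases the tetrahedron count (giving termination), and the identification of the case analysis of intermediate flattening cells as the locus of the real work are all correct and match Burton's streamlined account in~\cite{Burton:Crushing}. If you were to flesh this out into a full proof, the only substantive gap is the one you already flag: the combinatorial enumeration showing that each atomic flattening either is a homeomorphism or splits off one of the four listed summands. You correctly defer this to the cited sources rather than attempting it, which is appropriate here since the paper does the same.
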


Jaco and Rubinstein prove that 0-efficient triangulations of a closed irreducible manifold have one vertex, and 0-efficient triangulations of a bounded irreducible $\partial$-irreducible manifold (without 2-sphere boundary components\footnote{This is a technicality that only rules out the simple case where the irreducible manifold is a 3-cell.}) have all vertices in the boundary, with exactly one vertex per boundary component.

We now introduce the main combinatorial result of this article, namely that crushing does not increase the carving-width. We use this result repeatedly as a tool to manipulate hyperbolic manifolds in the latter sections.

\begin{theorem}\label{Thm:crushing}
Let $\calT$ be a generalised (or ideal, or bounded) triangulation of a 3-manifold $M$, and $S$ a normal surface in $\calT$. Let $\calT^*$ be the triangulation obtained after crushing $S$. Then the dual graph of $\calT^*$ is an immersion of the dual graph of $\calT$.
\end{theorem}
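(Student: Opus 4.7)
The plan is to exhibit, in the sense of Definition~\ref{Def:Immersion}, a node map $\pi\from V(\G^*)\to V(\G)$ together with an arc-disjoint assignment of paths in $\G$ to the arcs of $\G^*$, where $\G$ and $\G^*$ denote the dual graphs of $\calT$ and $\calT^*$, respectively.

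First I would define the node map by tracking which tetrahedra survive crushing. A tetrahedron of $\calT$ disjoint from $S$ persists unchanged as a tetrahedron of $\calT^*$; a tetrahedron of $\calT$ meeting $S$ only in normal triangles has its corner cells collapsed to footballs and then flattened away, leaving at most one central tetrahedron in $\calT^*$; and a tetrahedron of $\calT$ meeting $S$ in a normal quad is cut into triangular purses and footballs, all of which are flattened, and so contributes no tetrahedron to $\calT^*$. Sending each $\Delta^*\in V(\G^*)$ to its originating tetrahedron of $\calT$ gives $\pi$.

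Next I would assign paths to arcs by performing the flattening step one non-tetrahedral cell at a time, as in \cite[Lemma~3]{Burton:Crushing}. Each such flattening is, at the level of the dual graph of the current intermediate cell decomposition, a sequence of liftings in the sense of Definition~\ref{Def:Immersion}: flattening a $k$-sided football removes its node and directly identifies the $k$ pairs of neighbouring cells glued across its bigonal faces, each such identification being a lifting; flattening a triangular purse, a bigonal pillow, or a triangular pillow admits an analogous local description. Inverting these liftings, each arc $(u^*,v^*)$ of $\G^*$ pulls back to a path in the dual graph of the cut-and-collapsed cell decomposition $\calT_{cut}$, which I then project to $\G$ by identifying all cells of $\calT_{cut}$ lying in a common tetrahedron of $\calT$ and keeping only arcs across faces of $\calT$ not cut by $S$. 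This yields the required path from $\pi(u^*)$ to $\pi(v^*)$ in $\G$.

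The main obstacle is arc-disjointness, made subtle by the fact that cutting along $S$ initially expands the cell decomposition before flattening contracts it. My approach is a local analysis: each face of $\calT$ is either preserved as a face of $\calT^*$, contributing a single arc to $\G^*$, or is cut by $S$ into sub-faces each of which is consumed by exactly one lifting during the flattening of an adjacent non-tetrahedral cell. Combining this with the observation (implicit in the proof of Lemma~\ref{Lem:Immersion}) that liftings preserve arc-disjointness, and proceeding by induction on the flattening steps, shows that distinct arcs of $\G^*$ lift to arc-disjoint paths in $\G$, completing the verification.
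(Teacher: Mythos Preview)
Your overall strategy---define a node map by tracking surviving tetrahedra, then realise arcs of $\G^*$ as paths in $\G$ via the liftings that occur during flattening---is sound and matches the paper's in spirit. But you take a detour through the dual graph of the cut-and-collapsed complex $\calT_{cut}$ and then project back to $\G$, and this is what creates the ``main obstacle'' you identify. The paper avoids the obstacle entirely by never leaving $\G$: it processes the tetrahedra of $\calT$ one at a time and observes that, at the level of $\G$ itself, crushing a tetrahedron $\Delta$ with no quad replaces its node by the node for the surviving central tetrahedron with \emph{the same four adjacencies} (each face of the central tetrahedron is a sub-triangle of a face of $\Delta$, glued to the corresponding piece of the neighbouring tetrahedron), while crushing a $\Delta$ containing a quad performs exactly two liftings in $\G$ (one per triangular purse, each identifying two of the four original faces of $\Delta$) and then deletes the node. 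Thus $\G^*$ is obtained from $\G$ by a sequence of liftings and node/arc removals, which is the first characterisation of immersion in Definition~\ref{Def:Immersion}; arc-disjointness never needs to be checked separately.

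Two smaller points. First, your description of football flattening is not right: a $k$-sided football has $k$ bigonal faces, and flattening it to an edge simply collapses those bigons---in the dual graph this is removal of the node and its incident arcs, not $k$ liftings (there is no natural pairing of the $k$ neighbours, and indeed $k$ may be odd). The liftings happen only when triangular purses (or their degenerations, the triangular pillows) are flattened, since those are the cells with two triangular faces that get identified. Second, your projection step ``keeping only arcs across faces of $\calT$ not cut by $S$'' would discard exactly the arcs you need: every face of $\calT$ meeting $S$ is cut, yet the surviving central tetrahedra are still glued across (sub-faces of) those faces. The fix is precisely the paper's observation that one sub-face per original face carries the surviving adjacency, so you may as well argue directly in $\G$ from the start.
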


\begin{proof}
We track the evolution of the cell decomposition of the triangulation under the three steps (cut, collapse, flatten) of crushing $S$ in $\calT$, in order to describe the change to its dual graph. Let $\G$ be the dual graph of $\calT$.

Cut $\calT$ along $S$. The result is a collection of cells; each tetrahedron that meets $S$ in $\calT$ is split into cells across normal discs of $S$ as in Figures \ref{Fig:Cut} and~\ref{Fig:Collapse}.

Now collapse each normal disc of $S$ to a point, using the quotient topology, to obtain the cell complex $\calT'$. This operation splits every tetrahedron $\Delta$ of $\calT$ into a collection $C_\Delta = \{\Delta_0, \ldots, \Delta_n\}$ of cells, where $n$ is the number of normal disks in $\Delta \cap S$. The cells are of four types: tetrahedra, $3$-sided footballs, $4$-sided footballs, and triangular purses; see \reffig{Collapse}.

Note that if $\Delta \cap S$ contains no quad, then $C_\Delta$ is made of exactly one (central) tetrahedron, and a possibly empty collection of $3$-sided footballs. If $\Delta \cap S$ contains quads, then $C_\Delta$ is made of two triangular purses, and a possibly empty collection of $3$-sided and $4$-sided footballs. 

Now flatten. We obtain a generalised triangulation $\calT^*$. The $3$ and $4$-sided footballs become edges, and thus have no dual nodes or arcs in $\calT^*$. A tetrahedron is not flattened, thus the dual graph of $\calT^*$ has one node corresponding to $\Delta\cap S$ containing no quads. Two triangular purses flatten to triangles, thus removing the node corresponding to $\Delta$.

Note if we perform this process one tetrahedron at a time, adjusting the dual graph one node at a time, then each node corresponding to $\Delta$ that does not meet a quad will be replaced by a node corresponding to a tetrahedron of the crushing; the arcs from this node will run to the same nodes as before the replacement, since faces of the new tetrahedron are still glued to faces of adjacent tetrahedra. For each node corresponding to $\Delta$ that meets a quad, the node is removed, and two liftings are performed: when each triangular purse is flattened, it identifies two triangular faces together and removes a node. Thus faces of adjacent tetrahedra become glued through this triangle. The result is a lifting. See \reffig{Lifting}.

Perform this process for each tetrahedron. Then separate tetrahedra joined by pinched vertices and edges, which does not affect the dual graph. We see that the final result is an immersion. 
\end{proof}

\begin{figure}
  \centering
  \includegraphics[width=12cm]{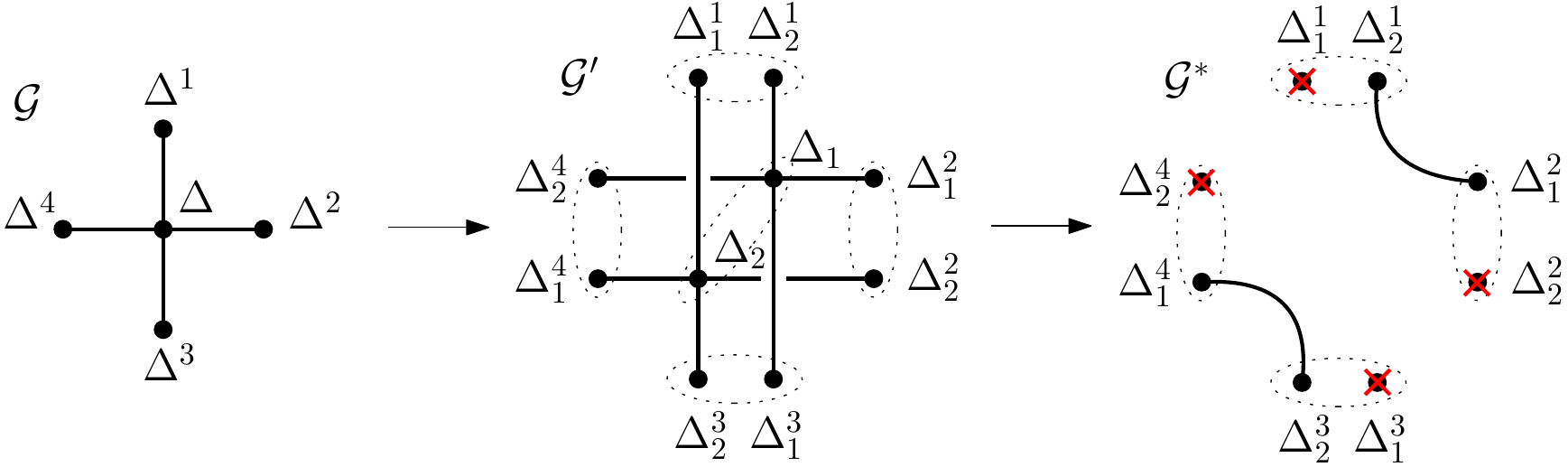}
  \caption{Local transformation of the dual graph at tetrahedron $\Delta$ from~\reffig{Collapse} when crushing iteratively at $\Delta$. $\G$ is the dual graph before crushing, $\G'$ is the dual graph after cutting, collapsing, and then flattening only the 3 and 4-sided footballs from $\Delta$ ($\Delta_1$ and $\Delta_2$ stand for the two triangular purses), and $\G^*$ is the dual graph after flattening the triangular purses. Note that some of the nodes $\Delta_i^j$ in $\G'$ may have already been removed when flattening adjacent footballs, and some of the bigon faces of triangular purses may already be collapsed. This does not change the analysis as it only removes nodes and arcs from the dual graph. 
  $\G^*$ is obtained from $\G$ by lifting $\Delta^1\Delta\Delta^2$ and $\Delta^3\Delta\Delta^4$, then removing the node $\Delta$. Because their corresponding cells have bigonal faces, and hence cannot be tetrahedra, the crossed out nodes on $\G^*$ will be removed from the graph when flattening adjacent cells. The immersion of $\G^*$ into $\G$ is obtained by mapping the (non-removed) nodes $\Delta_i^j$ in $\G^*$ to $\Delta^j$ in $\G$.}
  \label{Fig:Lifting}
\end{figure} 

\begin{corollary}\label{Cor:carving-width}
Crushing does not increase carving-width. 
\end{corollary}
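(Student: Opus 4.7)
The plan is to combine \refthm{crushing} directly with \reflem{Immersion}, so the proof is essentially a one-line deduction. First I would let $\calT$ denote the given triangulation, $S$ a normal surface in $\calT$, and $\calT^*$ the triangulation obtained by crushing $S$. Writing $\G$ for the dual graph of $\calT$ and $\Hgraph$ for the dual graph of $\calT^*$, the preceding theorem tells us that $\Hgraph$ is an immersion of $\G$.

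Next I would invoke \reflem{Immersion}, which gives $\cng(\G) \geq \cng(\Hgraph)$. By the definition of carving-width of a cell-decomposition (\refdef{Carvingwidth}), this is exactly $\cng(\calT) \geq \cng(\calT^*)$, which is the claim.

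There is no real obstacle here; the work has all been done in \refthm{crushing}. The only thing to watch for is the (entirely cosmetic) point that crushing may produce pinched vertices and edges which are then separated, but as noted in the proof of \refthm{crushing} this does not change the dual graph, so the immersion conclusion survives. One might also want to remark that the argument applies iteratively: crushing any finite number of normal surfaces still produces a dual graph that is an immersion of the original, so the carving-width is monotone non-increasing along any such sequence. This iterated form is what will be used in the hyperbolic-volume applications in the later sections.
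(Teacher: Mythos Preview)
Your proposal is correct and matches the paper's own proof, which is simply the one-line observation that immersion does not increase carving-width (\reflem{Immersion}), combined with \refthm{crushing}. Your additional remarks on pinched vertices and iteration are accurate but go slightly beyond what the paper records here.
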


\begin{proof}
Immersion does not increase carving-width, \reflem{Immersion}. 
\end{proof}

\begin{corollary}\label{Cor:CrushingTreewidth}
Crushing an arbitrary finite number of normal surfaces in a triangulation increases the treewidth by at most a multiplicative factor of six. 
\end{corollary}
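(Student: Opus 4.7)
The plan is to deduce this corollary from three ingredients already in place: Corollary~\ref{Cor:carving-width}, the degree bound on dual graphs of generalised triangulations, and the comparison between treewidth and carving-width in Theorem~\ref{Thm:boundcngtw}. The strategy is a direct inequality chain, with no new combinatorial work required.

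First, I would observe that the dual graph of any generalised triangulation (including those produced by crushing, after separating tetrahedra joined at pinched vertices/edges) has maximum degree at most four, since every tetrahedron has four triangular faces. This uniform degree bound is the point that keeps the constants independent of the triangulation. Next, I would apply Corollary~\ref{Cor:carving-width} iteratively: crushing one normal surface does not increase carving-width, and since the result is again a generalised triangulation in which a normal surface can be chosen and crushed, we conclude that for any finite sequence $S_1, \dots, S_k$ of crushings, producing triangulations $\calT = \calT_0, \calT_1, \dots, \calT_k = \calT^*$, one has
\[
\cng(\calT^*) = \cng(\calT_k) \leq \cng(\calT_{k-1}) \leq \cdots \leq \cng(\calT_0) = \cng(\calT).
\]

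Finally, I would apply Theorem~\ref{Thm:boundcngtw} at both ends of this chain, using $d = 4$ for both dual graphs:
\[
\tfrac{2}{3}\bigl(\tw(\calT^*)+1\bigr) \;\leq\; \cng(\calT^*) \;\leq\; \cng(\calT) \;\leq\; 4\bigl(\tw(\calT)+1\bigr).
\]
Rearranging gives $\tw(\calT^*)+1 \leq 6(\tw(\calT)+1)$, so $\tw(\calT^*) \leq 6\,\tw(\calT) + 5$, which is the claimed multiplicative factor of six.

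There is essentially no obstacle here; the only subtlety worth flagging is that the argument requires the degree bound on the dual graph to persist after an arbitrary number of crushings, which it does since each crushed triangulation is itself a generalised triangulation of tetrahedra. Observe also that the constant does not deteriorate with the number of crushings $k$: because carving-width is monotone nonincreasing under crushing, the factor $6$ is absorbed only once when converting back from carving-width to treewidth at the end.
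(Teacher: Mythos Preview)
Your proposal is correct and follows exactly the paper's approach: combine \refcor{carving-width} with \refthm{boundcngtw} using the degree bound $d=4$ on dual graphs of triangulations. The paper's proof is a one-sentence pointer to these ingredients, and you have simply unpacked the inequality chain explicitly.
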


\begin{proof}
This follows from \refcor{carving-width} and \refthm{boundcngtw}, using the fact that a graph coming from the dual of a 3-manifold triangulation has all nodes of degree at most four. 
\end{proof}

\begin{figure}
\centering
\includegraphics[width=10cm]{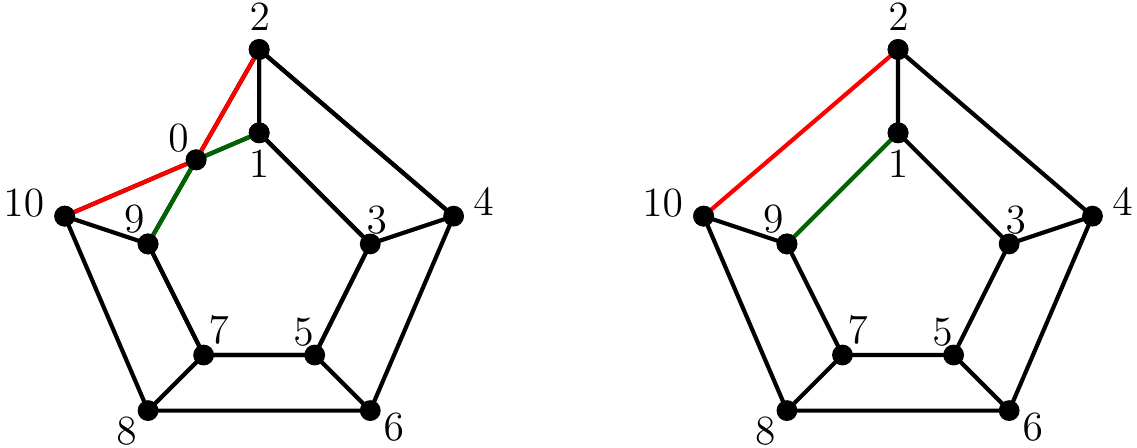}
\caption{Liftings of 2-0-10 and 1-0-9 increase the treewidth of the graph.}
\label{Fig:twincr}
\end{figure}

\begin{remark}
Note that in general the two liftings pictured in \reffig{Lifting} may increase the treewidth of a graph. For example, consider the graphs in \reffig{twincr}, where the graph of the pentagonal prism (on the right) is obtained from liftings 2-0-10 and 1-0-9 in the graph on the left. The latter has treewidth $3$, as it admits $K_4$ as a minor ($\tw \geq 3$), and a path decomposition $(B_i)_{i = 1 \ldots 8}$, $B_i = \{0,i,i+1,i+2\}$, of width $3$. The former is a well-known obstruction to treewidth $3$, and has treewidth $4$~\cite{ARNBORG19901,doi:10.1002/net.3230200304}.
\end{remark}

In their seminal work on 0-efficient triangulations, Jaco and Rubinstein proved that a minimal triangulation (i.e.\ with a minimal number of tetrahedra) of a manifold is 0-efficient. In the same spirit, we deduce the following for triangulation width:

\begin{corollary}
Any closed, orientable, irreducible 3-manifold $M$, not $S^3$, $S^2 \times S^1$, $\RP^3$ or $L(3,1)$, admits a 0-efficient triangulation of optimal carving-width $\cng(M)$.

Any compact, orientable, irreducible, $\partial$-irreducible 3-manifold $M$, not the 3-cell, admits a 0-efficient triangulation of optimal carving-width $\cng(M)$.
\end{corollary}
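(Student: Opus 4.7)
The plan is to obtain the desired 0-efficient triangulation by taking a carving-width minimizer and feeding it into the Jaco--Rubinstein crushing algorithm of \refthm{JRCrushing}, using \refcor{carving-width} to control how the carving-width evolves.

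First I would choose a generalised triangulation $\calT$ of $M$ that realises the optimal carving-width, so $\cng(\calT) = \cng(M)$. Then apply the algorithm of \refthm{JRCrushing} to $\calT$: this produces a finite sequence of normal-surface crushings and outputs triangulations $\calT_1,\ldots,\calT_n$ of manifolds $M_1,\ldots,M_n$ with $M = M_1\#\cdots\#M_n$, each $\calT_i$ either 0-efficient or recognised as a triangulation of $S^3$, $S^2\times S^1$, $\RP^3$, or $L(3,1)$. Since $M$ is irreducible and is not one of the exceptional manifolds listed, the connected sum decomposition must be trivial: exactly one $M_i$ equals $M$, and the remaining summands are $3$-spheres. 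The piece triangulating $M$ is therefore 0-efficient, because $M$ itself is not among the exceptional manifolds. Call this piece $\calT_M$.

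Next I would track carving-width through the crushing process. A single crushing is covered by \refthm{crushing}: the dual graph of the crushed triangulation is an immersion of the dual graph of the input. Composing immersions (one per crushing step in the algorithm) shows that the dual graph of the disjoint union $\calT_1 \sqcup \cdots \sqcup \calT_n$ is an immersion of the dual graph of $\calT$. The dual graph of any single component $\calT_i$ is obtained from that disjoint union by removing the other components' nodes and arcs, which is itself an instance of the immersion operation. Therefore \reflem{Immersion} gives
\[
\cng(\calT_M) \leq \cng(\calT_1 \sqcup \cdots \sqcup \calT_n) \leq \cng(\calT) = \cng(M).
\]
The reverse inequality $\cng(\calT_M) \geq \cng(M)$ is immediate from the definition of $\cng(M)$ as an infimum over generalised triangulations of $M$. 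Combining these gives $\cng(\calT_M) = \cng(M)$, and $\calT_M$ is 0-efficient, establishing the first assertion.

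For the second assertion I would run the identical argument in the bounded, $\partial$-irreducible setting: the algorithm of \refthm{JRCrushing} also produces triangulations that are either 0-efficient (no non-trivial normal disk) or are triangulations of the exceptional list, and the connected sum decomposition together with $\partial$-irreducibility and the exclusion of the $3$-cell forces the $M$-summand's triangulation to be 0-efficient. The carving-width bound is unchanged. I do not expect a genuine obstacle here; the only subtlety worth spelling out carefully is that the crushing algorithm iterates, so one must verify that immersion is preserved under composition and under restriction to a connected component of the dual graph, which follows directly from \refdef{Immersion}.
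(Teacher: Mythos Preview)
Your proposal is correct and follows essentially the same approach as the paper: start from a carving-width minimiser, apply the Jaco--Rubinstein crushing of \refthm{JRCrushing}, and invoke \refcor{carving-width} to conclude. The paper's proof is a two-line sketch that leaves implicit exactly the points you spell out (why the output of crushing is a single 0-efficient triangulation of $M$ rather than a non-trivial connected sum, why iterated crushing and passing to a component still respect the carving-width bound, and the trivial reverse inequality), so your version is simply a more careful rendering of the same argument.
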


\begin{proof}
Let $\calT$ be a triangulation of $M$ of carving-width $\cng(M)$. By~\refthm{JRCrushing}, one can crush normal spheres and disks in $\calT$ to get a 0-efficient triangulation $\calT^*$ of carving-width at most $\cng(M)$ by~\refcor{carving-width}.
\end{proof}

To conclude this section, we prove the following simple property of carving-width.

\begin{lemma}\label{Lem:cngconnecting}
Let $\G$ and $\G'$ be two graphs, and let $\G \# \G'$ be obtained by adding $m \geq 1$ arcs between nodes of $\G$ and nodes of $\G'$, not counting multiplicities. Then
\[
\cng(\G \# \G') \leq \max \{ \cng(\G) + m-1, \cng(\G') + m-1, \text{\emph{max degree in}} \ \G \# \G' \}.
\]
If the $m$ arcs are incident to a single node $u$ in $\G$, then   
\[
\tw(\G \# \G') \leq \max \{ \tw(\G), \tw(\G') + 1 \}.
\] 
\end{lemma}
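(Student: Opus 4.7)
The plan is to splice together optimal structures for $\G$ and $\G'$: tree embeddings in the carving-width case, and tree decompositions in the treewidth case.

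For the carving-width bound, fix optimal tree embeddings $\pi\from V(\G)\to L(T)$ and $\pi'\from V(\G')\to L(T')$ realising $\cng(\G)$ and $\cng(\G')$, and one of the new arcs, say with endpoints $u\in V(\G)$ and $v\in V(\G')$. The main construction subdivides the leaf arc of $T$ at $\pi(u)$ by a new $3$-valent node $x$, subdivides the leaf arc of $T'$ at $\pi'(v)$ by $x'$, and inserts a bridge arc $\{x,x'\}$, yielding a binary tree $T''$ on the leaf set $V(\G)\cup V(\G')$. The congestion analysis goes arc by arc: the leaf arcs at $\pi(u)$ and $\pi'(v)$ carry only arcs of $\G\#\G'$ incident to these nodes, bounded by the max degree; any other arc of $T$ (or the $T$-interior half of the subdivided arc) inherits at most $\cng(\G)$ original arcs together with new arcs whose $\G$-endpoint lies on the far side of the arc from $x$, and since $\pi(u)$ sits on the $x$-side of every such arc no new arc incident to $u$ contributes, leaving at most $\cng(\G)+m-1$; symmetrically at most $\cng(\G')+m-1$ on the $T'$-side; the bridge carries all $m$ new arcs and nothing else, fitting the bound as soon as $\cng(\G)\geq 1$, $\cng(\G')\geq 1$, or the max degree is at least $m$.

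The remaining degenerate case has $\cng(\G)=\cng(\G')=0$ and max degree less than $m$: both graphs consist of isolated nodes (modulo loops), and the bridge alone would overshoot. I would replace it by placing $\pi(u)$ and $\pi'(v)$ as sibling leaves of a new $3$-valent node $X$, whose third branch houses an arbitrary binary tree on the remaining nodes of $V(\G)\cup V(\G')$. The arc $(u,v)$ then traverses only the two sibling leaf arcs, so every internal arc of $T''$ carries at most $m-1$ new arcs; an averaging argument over the $m$ new arcs of the bipartite graph they form (using that the max degree is strictly less than $m$) produces a pair $(u,v)$ with $\deg(u)+\deg(v)\leq m+1$, keeping the bottleneck arc adjacent to $X$ at congestion at most $m-1$ as well.

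For the treewidth bound, take optimal tree decompositions $(X,\{B_\tau\})$ of $\G$ and $(X',\{B'_\sigma\})$ of $\G'$, augment every $B'_\sigma$ to $B'_\sigma\cup\{u\}$, and connect $X$ to $X'$ by a single new tree arc between a bag of $X$ containing $u$ (one exists because $u\in V(\G)$) and any bag of $X'$. Every arc of $\G$ or $\G'$ stays covered by its original bag, each new arc $(u,v_i)$ is covered by the augmented $X'$-bag containing $v_i$, and the bags containing $u$ form a connected subtree because its $X$-bags form a subtree, every $X'$-bag now contains $u$, and the join enters $X'$ through a $u$-containing bag; the resulting width is $\max\{\tw(\G)+1,\tw(\G')+2\}-1=\max\{\tw(\G),\tw(\G')+1\}$. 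The main obstacle throughout is the carving-width bridge bottleneck: any tree arc that separates the $V(\G)$-leaves from the $V(\G')$-leaves necessarily carries all $m$ new arcs, so meeting the bound requires either absorbing the extra unit into $\cng(\G)$, $\cng(\G')$, or the max-degree term, or, when none applies, sacrificing the clean bridge construction for the sibling-pair splice of the degenerate case.
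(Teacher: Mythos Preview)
Your proof is correct and takes essentially the same approach as the paper: splice optimal tree embeddings by subdividing the leaf arcs at $\pi(u)$ and $\pi'(v)$ and inserting a bridge, then bound the congestion arc by arc; for treewidth, adjoin the two tree decompositions and add $u$ to every bag on the $\G'$ side.

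You are in fact more careful than the paper about the bridge arc (which carries exactly $m$ paths) and the resulting degenerate case $\cng(\G)=\cng(\G')=0$ with max degree $<m$; the paper's proof does not explicitly close this gap, implicitly relying on the fact that in its applications the graphs are duals of nonempty triangulations and so have $\cng\geq 1$. Two small simplifications in your degenerate argument: first, the averaging step is unnecessary, since for \emph{any} edge $(u,v)$ the edges incident to $u$ or $v$ number $\deg(u)+\deg(v)-1\leq m$, giving $\deg(u)+\deg(v)\leq m+1$ automatically; second, your own observation that the path for $(u,v)$ avoids every internal arc of $T''$ already handles the third arc at $X$ (it is internal whenever the rest has at least two nodes), so the separate $\deg(u)+\deg(v)$ estimate is redundant.
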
 

\begin{proof}
Pick two optimal tree embeddings $\pi\from\G\to T$ and $\pi'\from\G'\to T'$, with arcs $a\in T$ and $a'\in T'$ realising the congestion $\cng(\pi)$ and $\cng(\pi')$, respectively. Without loss of generality, suppose $\cng(\pi)\geq\cng(\pi')$, hence at least as many paths run through $a$ as $a'$. Let $u$ in $\G$ and $v$ in $\G'$ be two nodes that are adjacent in $\G \# \G'$. Subdivide the only arcs incident to the leaves of $\pi(u)$ and $\pi'(v)$ in the tree embeddings, and connect the two new nodes by an arc. This leads to a tree embedding $\Pi(\G \# \G')$ of $\G\#\G'$.

Note that $m$ paths run through the arc connecting the trees $T$ and $T'$. The congestion $\cng(\Pi)$ will be largest if as many of those paths as possible also run through $a$. If $a$ in $T$ does not connect the leaf $\pi(u)$, then at most $m-1$ new paths run through $a$, because the path from $\Pi(u)$ to $\Pi(v)$ only runs through new arcs. 

If $a$ in $T$ is the arc connecting the leaf $\pi(u)$, then $\cng(\G)$ paths run to $u$, hence $u$ is $\cng(\G)$-valent. The arc $a$ is subdivided to form the new tree, and $\cng(\G)$ paths from the tree $T$ will continue to run over the two new arcs obtained by subdividing $a$. If one of the $m$ new arcs between $\G$ and $\G'$ does not have an endpoint on $u$, then the corresponding path will run over the subarc of $a$ that does not meet the leaf of $\Pi(u)$, whereas the arc from $\Pi(u)$ to $\Pi(v)$ will not meet this arc, and thus the congestion is at most $\cng(\G)+m-1$. However, if all the $m$ new arcs between $\G$ and $\G'$ run from $u$ to nodes of $\G'$, then all $m$ new paths must also run over the arc connecting the leaf of $\Pi(u)$. Thus the congestion will be at most $\cng(\G)+m$, which is the degree of $u$ in $\G\#\G'$. 

For treewidth, pick two optimal tree decompositions $T$ and $T'$ for $\G$ and $\G'$ respectively, and let $u$ in $\G$ be the node to which all new arcs are incident. Let $(u,v)$, with $v$ in $\G'$, be a new arc. Let $B_u$ be a bag of $T$ containing $u$, and $B_v$ be a bag of $T'$ containing $v$. Connecting $B_u$ and $B_v$ with an arc, and adding node $u$ to all bags in $T'$, leads to a tree decomposition of $\G \# \G'$, and the result follows.
\end{proof}

\section{Triangulating thick hyperbolic manifolds} 
\label{Sec:geometry}

In this section, we consider a finite volume compact hyperbolic $3$-manifold $M$ with boundary and bounded injectivity radius. We show such a manifold admits a triangulation with a bounded number of tetrahedra, where the bound is linear in volume. This result is not new; its proof is outlined in Thurston's notes \cite{Thurston:notes}, and proved carefully elsewhere, for example by Kobayashi and Reick \cite{KobayashiRieck}.
We step through highlights of the proof here for completeness, and also to discuss the algorithmic nature of the argument, in order to actually compute a triangulation. The algorithm will be summarised in \refsec{Algorithm}.

\subsection{Hyperbolic manifolds}\label{Sec:HyperbolicReview}

Here we review definitions and results in hyperbolic geometry that are most important to our results. For further information on hyperbolic 3-manifolds, see for example~\cite{BenedettiPetronioHyperbolicGeom}.

\begin{definition}\label{Def:ThickPart}
  Let $\mu>0$, and let $M$ be a hyperbolic 3-manifold. The \emph{$\mu$-thick part of $M$}, denoted $\Mmu$, consists of all points $x\in M$ such that any geodesic based at $x$ has length at least $\mu$. Equivalently, $\Mmu$ is the set of points in $M$ with injectivity radius at least $\mu/2$.

  The complement of the $\mu$-thick part is the $\mu$-thin part.
\end{definition}

Recall that by the Margulis lemma, there exists a universal constant $\epsilon_3$ such that for any finite volume hyperbolic 3-manifold $M$, and any $\mu\leq\epsilon_3$, the $\mu$-thin part of $M$ consists only of tubes about geodesics and cusps~\cite{KazhdanMargulis}. In the discussion below, we will always assume that $0<\mu\leq \epsilon_3$.
Such a $\mu$ is said to be a 3-dimensional \emph{Margulis constant}. 

Let $B_D(x,r)$ denote the open ball of centre $x$ and radius $r > 0$ in $D$. Recall that the volume of a hyperbolic $3$-ball of radius $r > 0$ is given by:
\[
  \vol(B_{\HH^3}(r)) = \pi (\sinh 2r - 2r);
\]
see, for example~\cite{FenchelHyperbolicGeom}.

\subsection{Triangulating thick parts}\label{Sec:Meshing} 
  
In this section, for a fixed 3-dimensional Margulis constant $\mu$, we recall the argument of \cite{KobayashiRieck} to show that a small neighbourhood of the $\mu$-thick part of a hyperbolic 3-manifold $M$ can be triangulated with $O(\vol(M))$ tetrahedra.

We start by setting notation. For $\mu>0$ a Margulis constant and any $d>0$, denote the metric $d$-neighbourhood of $\Mmu$ by $X:=N_d(\Mmu)$. In \cite[Proposition~1.2]{KobayashiRieck}, it is shown that there exists $R:=R(\mu,d)$ such that for any complete finite volume hyperbolic 3-manifold $M$, and any $x\in X$, the injectivity radius of $x$ is at least $R$, and $X$ is obtained from $M$ by drilling out short geodesics and truncating cusps. Let $D=\min\{R,d\}$.

\begin{definition}\label{Def:Net}
Let $(X,\dist_X)$ be a metric space. For $\varepsilon > 0$, a set of points $P \subset X$ is $\varepsilon$-dense in $X$ if, for any $x \in X$, there is a point $p \in P$ such that $\dist_X(x,p) < \varepsilon$. 
For $1 \geq \delta > 0$, the set $P$ is $\delta \varepsilon$-separated if any two distinct points $p,q \in P$ satisfy $\dist_X(p,q) \geq \delta \varepsilon$. 
We call $P$ a $(\delta,\varepsilon)$-net if it is $\varepsilon$-dense and $\delta \varepsilon$-separated. Note that any $(\delta,\varepsilon)$-net is also a $(\delta',\varepsilon)$-net for any $\delta' \leq \delta$.
\end{definition}

\begin{lemma}\label{Lem:mesh}
Let $\mu>0$ be a 3-dimensional Margulis constant and $d>0$. Let $M$ be a hyperbolic manifold of finite volume $\vol(M)$, with $d$-neighbourhood of the thick part $\Mmu$ denoted by $X$. Then $X$ admits a $(\delta,\varepsilon)$-net of size
\[
n \leq \frac{\vol(M)}{\pi (\sinh \varepsilon - \varepsilon)} \leq \frac{6}{\pi} \frac{\vol(M)}{\varepsilon^3}
\]
for any $\varepsilon \leq \mu$ and any $\delta \leq 1$.
\end{lemma}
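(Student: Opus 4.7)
The plan is to construct the net greedily and then bound its size by a standard packing/volume argument in hyperbolic geometry.

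First I would produce a maximal subset $P \subset X$ with the property that any two distinct points of $P$ are at distance at least $\varepsilon$ in $X$ (existence follows by a Zorn's lemma or inductive greedy construction, since $X$ is a separable metric space). By maximality, for every $x \in X$ there must exist some $p \in P$ with $\dist_X(x,p) < \varepsilon$, because otherwise $P \cup \{x\}$ would remain $\varepsilon$-separated and contradict maximality; hence $P$ is $\varepsilon$-dense. Since any two distinct points of $P$ are at distance at least $\varepsilon \geq \delta\varepsilon$ (using $\delta \leq 1$), $P$ is also $\delta\varepsilon$-separated, so $P$ is a $(\delta,\varepsilon)$-net.

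Next I would bound $|P| = n$ by a packing argument. Consider the collection of open metric balls $\{B_M(p,\varepsilon/2) : p \in P\}$ in $M$. If two such balls met, the triangle inequality would force two distinct points of $P$ to be at distance less than $\varepsilon$, contradicting $\varepsilon$-separation; so these balls are pairwise disjoint. Moreover, each ball is embedded in $M$ and isometric to the hyperbolic ball of radius $\varepsilon/2$: for $p \in \Mmu$ this is immediate from the definition of the thick part (injectivity radius $\geq \mu/2 \geq \varepsilon/2$), and for $p \in X \setminus \Mmu$ it follows from the Kobayashi--Rieck injectivity bound $R$ recalled just before the lemma (shrinking $\mu$ if necessary so that $\mu/2 \leq R$). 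Hence each ball has volume exactly $\pi(\sinh\varepsilon - \varepsilon)$, and disjointness together with the containment in $M$ yields
\[
n \cdot \pi(\sinh\varepsilon - \varepsilon) \;\leq\; \vol(M),
\]
which rearranges to the first inequality.

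The second inequality is a direct analytic estimate: the Taylor series $\sinh\varepsilon - \varepsilon = \varepsilon^3/6 + \varepsilon^5/120 + \cdots$ has nonnegative terms for $\varepsilon > 0$, so $\sinh\varepsilon - \varepsilon \geq \varepsilon^3/6$, from which $\vol(M)/(\pi(\sinh\varepsilon - \varepsilon)) \leq 6\vol(M)/(\pi\varepsilon^3)$. The only real obstacle is the injectivity radius bookkeeping needed to guarantee that the $\varepsilon/2$-balls are embedded for every net point, not only those in $\Mmu$; once one invokes the Kobayashi--Rieck bound on injectivity radius throughout $X = N_d(\Mmu)$, the rest of the argument is a textbook sphere-packing computation.
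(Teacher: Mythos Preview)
Your argument is correct and follows essentially the same greedy-packing approach as the paper: build a maximal $\varepsilon$-separated set, observe it is $\varepsilon$-dense, and bound its cardinality via disjoint embedded $\varepsilon/2$-balls. You are in fact slightly more careful than the paper, which asserts the balls are embedded without comment and omits the Taylor estimate for the second inequality.
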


\begin{proof}
This is the standard iterative construction of nets. Fix an arbitrary $\varepsilon \leq \mu$. 
Set $P$ to be the empty set.  While there exists a point $x$ in the set
\[ X - \bigcup_{p \in P} B_{X}(p, \varepsilon), \]
set $P$ to be $P \cup \{x\}$. 
At any time of the procedure, the union of balls of radius $\varepsilon/2$ centred on the points of $P$ are disjoint and embedded in $M$. Consequently, 
\[
|P| \times \vol(B_{\HH^3}(\varepsilon/2)) \leq \vol(M)
\]
Because $M$ has finite volume, the procedure terminates, and $P$ is a $(1,\varepsilon)$-net for $X$ by construction. 
\end{proof}

Recall that $D=\min\{R,d\}$. Kobayashi and Rieck take a maximal $D$-separated set for $X$, but it suffices for their argument to let $\{x_1, \dots, x_N\}$ be a $(1,D)$-net for $X$. Now let $\{V_1, \dots, V_N\}$ be the \emph{Voronoi cells} in $M$ corresponding to $\{x_1, \dots, x_N\}$, namely the sets
\[ V_i = \{p\in M \mid \dist(p,x_i) \leq \dist(p,x_j) \mbox{ for } j = 1, \dots, N\}. \]

Kobayashi and Rieck show that the components of $V_i\cap X$ consist of handlebodies with universally bounded genus, with boundaries consisting of geodesic faces meeting in geodesic edges and vertices, and that the number of such faces and edges (and vertices) is universally bounded independent of $M$. 

After possibly perturbing the points $\{x_1, \dots, x_N\}$ slightly, they give an algorithm that builds, for each component $V_{i,j}$ of $V_i\cap X$, a 2-complex $K_{i,j}$. The complex $K_{i,j}$ has totally geodesic faces, a universally bounded number of faces and edges, and it cuts $V_{i,j}$ into a single ball $B_{i,j}$.

By subdividing remaining faces into triangles, and then coning to the centre of the ball $B_{i,j}$, we obtain a triangulation of $V_{i,j}$ such that by construction, triangulations of distinct $V_{i,j}$ agree on their intersections. 

The above gives the following, which is \cite[Proposition~1.4]{KobayashiRieck}. 

\begin{proposition}\label{Prop:VoronoiTriangulations}
  Let $\mu$ be a 3-dimensional Margulis constant, and fix $d>0$. For any complete finite volume hyperbolic 3-manifold $M$, let $X$ denote the metric $d$-neighbourhood of $\Mmu$. Then there exists a constant $C=C(\mu,d)$ so that the following holds.
  \begin{enumerate}
  \item $M$ is decomposed into $N\leq C \vol(M)$ Voronoi cells $\{V_1, \dots, V_N\}$.
  \item $V_i\cap X$ is triangulated using at most $C$ tetrahedra for all $i=1, \dots, N$.
  \item For any $i,j\in\{1, \dots, N\}$, the triangulations in (2) coincide on $(V_i\cap X) \cap (V_j\cap X)$.
  \end{enumerate}
\end{proposition}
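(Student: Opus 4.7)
The plan is to follow the Kobayashi--Rieck argument as sketched in the excerpt, turning the three ingredients (net, Voronoi decomposition, cell-by-cell triangulation) into the three bulleted conclusions. First, I would fix $D = \min\{R,d\}$, where $R = R(\mu,d)$ is the uniform injectivity-radius lower bound on $X$ provided by \cite[Proposition~1.2]{KobayashiRieck}. Applying \reflem{mesh} with $\varepsilon = D$ and $\delta = 1$ produces a $(1,D)$-net $\{x_1,\dots,x_N\} \subset X$ with
\[
  N \leq \frac{6}{\pi}\,\frac{\vol(M)}{D^3} = C_1(\mu,d)\cdot \vol(M),
\]
which already gives part (1) once we extend the $x_i$ to a set of Voronoi sites in all of $M$; any net points added outside $X$ can be chosen to also satisfy $D$-separation so that $N$ still grows at most linearly in $\vol(M)$ (the thin part contributes a bounded number of extra sites, using that tubes and cusps have bounded geometry above the Margulis level).

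For parts (2) and (3), I would look at each Voronoi cell $V_i$ and its intersection with $X$. The key geometric input, proved in \cite{KobayashiRieck}, is that since each $x_i$ has injectivity radius at least $R \geq D$, the ball $B_{\HH^3}(D/2)$ embeds around $x_i$, and any component $V_{i,j}$ of $V_i \cap X$ lifts to the Dirichlet-style polyhedron cut out by finitely many geodesic bisectors in $\HH^3$. A packing argument in $\HH^3$ bounds the number of net points within distance $2D$ of $x_i$ by a universal constant $K = K(\mu,d)$, and hence the number of geodesic faces, edges, and vertices of each $V_{i,j}$ is at most $K$, and the genus of $V_{i,j}$ is universally bounded. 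After an arbitrarily small perturbation of the $x_i$ to put the Voronoi decomposition in general position, I would invoke the Kobayashi--Rieck algorithm that builds, inside each $V_{i,j}$, a finite geodesic 2-complex $K_{i,j}$ with at most $C_2(\mu,d)$ faces that cuts $V_{i,j}$ into a single 3-ball $B_{i,j}$.

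Triangulation is then routine: subdivide each geodesic face of $\partial B_{i,j} \cup K_{i,j}$ into triangles (by coning from a chosen interior vertex of the face), and then cone the resulting triangulated 2-sphere $\partial B_{i,j}$ to an interior point of $B_{i,j}$. The number of triangles, and therefore tetrahedra, introduced in $V_{i,j}$ is bounded above by a function of the number of faces and edges of $K_{i,j}$ and $\partial V_{i,j}$, hence by a constant $C_3(\mu,d)$. Summing over the bounded number of components $V_{i,j}$ of $V_i \cap X$ yields (2) with a constant $C(\mu,d) \geq \max(C_1,C_3)$. Compatibility in (3) comes from making the subdivision choices canonical: each geodesic 2-cell shared between $V_i$ and $V_j$ is subdivided using the same cone vertex (say the circumcentre, or the lexicographically first net point on its boundary), and each edge is subdivided the same way from both sides. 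The main obstacle is really just the geometric bookkeeping in showing that the $V_{i,j}$ have universally bounded combinatorial complexity and can be cut into balls by a bounded complex $K_{i,j}$; once that is accepted from \cite{KobayashiRieck}, the triangulation and matching steps are essentially mechanical.
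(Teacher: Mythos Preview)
Your proposal is correct and follows the same Kobayashi--Rieck outline that the paper sketches in the paragraphs immediately preceding the proposition (which the paper then simply cites as \cite[Proposition~1.4]{KobayashiRieck} rather than proving in full). One minor point: you do not need to add any Voronoi sites outside $X$---the Voronoi cells in $M$ of the net points $\{x_1,\dots,x_N\}\subset X$ already partition all of $M$, so your claim that the thin part contributes only boundedly many extra sites (which is not obviously true, since Margulis tubes can have large volume) can simply be dropped.
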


We now obtain the following consequence, which is \cite[Theorem~1.1]{KobayashiRieck}.

\begin{theorem}\label{Thm:MinVolTet}
Let $\mu$ be a 3-dimensional Margulis constant, and fix $d>0$. Then there exists a constant $v(\mu,d)>0$ such that for $M$ any closed hyperbolic 3-manifold with volume $\vol(M)$, the metric $d$-neighbourhood of the $\mu$-thick part $\Mmu$ admits a triangulation $\calT_B$ with number of tetrahedra at most
\[
   \frac{\vol(M)}{v(\mu,d)} = O(\vol(M)).
\]
\end{theorem}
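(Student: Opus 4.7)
The plan is to derive this theorem as a direct consequence of \refprop{VoronoiTriangulations}, which is essentially already doing all of the work: the three conclusions of that proposition were tailored precisely so that gluing the local triangulations gives a global triangulation of $X$ whose size is controlled by $\vol(M)$.

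First I would invoke \refprop{VoronoiTriangulations} with the given Margulis constant $\mu$ and the fixed $d>0$, obtaining the constant $C=C(\mu,d)$, the Voronoi decomposition $M=V_1\cup\cdots\cup V_N$ with $N\leq C\vol(M)$, and the triangulations of each $V_i\cap X$ using at most $C$ tetrahedra. Next, I would assemble these local triangulations into a single triangulation $\calT_B$ of $X$. The Voronoi cells $V_i$ have pairwise disjoint interiors by construction, and by item (3) of \refprop{VoronoiTriangulations} the triangulations of $V_i\cap X$ and $V_j\cap X$ agree on the common boundary $(V_i\cap X)\cap(V_j\cap X)$. It follows that the union of the local triangulations gives a well-defined triangulation $\calT_B$ of $X=\bigcup_{i=1}^N (V_i\cap X)$.

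Finally I would count tetrahedra: each Voronoi cell $V_i$ contributes at most $C$ tetrahedra to $\calT_B$, and there are at most $N\leq C\vol(M)$ cells, so
\[
   |\calT_B| \;\leq\; C\cdot N \;\leq\; C^2\,\vol(M).
\]
Setting $v(\mu,d):=1/C(\mu,d)^2$ (which depends only on $\mu$ and $d$, not on $M$) yields the desired bound $|\calT_B|\leq \vol(M)/v(\mu,d)=O(\vol(M))$.

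There is no substantive obstacle in this step: all the hard geometric input, namely bounding the combinatorial complexity of each Voronoi cell independently of $M$ and ensuring compatibility of the face structure across cell boundaries, is already encapsulated in \refprop{VoronoiTriangulations}. The only thing one should be careful about is the verification that the pieces glue to a genuine triangulation rather than just a cell decomposition, which is exactly what conclusion (3) of the proposition guarantees; beyond that, the theorem is a one-line count.
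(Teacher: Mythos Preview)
Your proposal is correct and follows exactly the same route as the paper's proof: invoke \refprop{VoronoiTriangulations}, use item~(3) to glue the local triangulations into a global one, and count $|\calT_B|\le C\cdot N\le C^2\vol(M)$, then set $v(\mu,d)=1/C^2$. The paper's version is simply terser.
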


\begin{proof}
Apply \refprop{VoronoiTriangulations} to $M$, decomposing $X$ into at most $C$ cells with at most $C$ tetrahedra each. Because the triangulations match where the cells overlap, this gives a triangulation of $N_d(\Mmu)$ with at most $C^2\vol(M)$ tetrahedra. Set $v(\mu,d)=1/C^2$. 
\end{proof}

Naturally, this triangulation has carving-width (and treewidth) at most $O(\vol(M))$.

\section{Triangulations of closed manifolds}
\label{Sec:combinatorics}

For $\mu$ a Margulis constant, and fixed $d>0$, let $\Mmu$ denote the thick part of $M$, and let $X:=N_d(\Mmu)$ denote its $d$-neighbourhood. Then $X$ is a compact manifold with (possible) torus boundary components; see \cite[Proposition~2.1]{KobayashiRieck}. Let $\calT_B$ be the triangulation of $X$ constructed in \refthm{MinVolTet}. This triangulation contains at most $\vol(M)/v(\mu,d)$ tetrahedra.

\begin{lemma}\label{Lem:0Efficient}
There exists a triangulation $\calT_{JR}$ of $X$ that has exactly one vertex in each boundary component of $\Mmu$, no other vertices, and contains at most $\vol(M)/v(\mu,d)$ tetrahedra. 
\end{lemma}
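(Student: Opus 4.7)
The plan is to feed the triangulation $\calT_B$ from \refthm{MinVolTet} into the Jaco-Rubinstein crushing algorithm (\refthm{JRCrushing}) and extract a 0-efficient triangulation of $X$, then invoke the Jaco-Rubinstein vertex-structure theorem for 0-efficient triangulations of bounded, irreducible, $\partial$-irreducible manifolds (recorded just after \refthm{JRCrushing}), which places exactly one vertex on each boundary component and none in the interior. Since crushing never increases the tetrahedron count, the resulting triangulation $\calT_{JR}$ inherits the bound $\vol(M)/v(\mu,d)$ from $\calT_B$.

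To apply those results, I would first confirm the requisite topological properties of $X$. For $d>0$ sufficiently small, $X = N_d(\Mmu)$ is obtained from $M$ by drilling out open Margulis tubes around short geodesics and truncating cusps (\cite[Proposition~2.1]{KobayashiRieck}), so $\bdy X$ consists of tori: horotori at the cusps, and tube boundaries around the short geodesics. Both families are incompressible in $X$: horotori are classically incompressible in any complete hyperbolic 3-manifold, and a compressing disk for a Margulis tube boundary, capped off by the meridian of the drilled solid torus, would bound a 3-ball in $M$ containing the drilled solid torus, contradicting non-triviality of the geodesic core. Irreducibility of $X$ then follows from that of $M$: any embedded 2-sphere in $X$ bounds a ball in $M$, and that ball cannot contain a cusp neighborhood or Margulis tube (neither of which is simply connected), so the ball lies in $X$. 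Thus $X$ is compact, orientable, irreducible, and $\partial$-irreducible with purely toroidal boundary, and in particular is not a 3-cell.

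Now I would run \refthm{JRCrushing} on $\calT_B$. The output is a family of triangulations whose underlying manifolds realise the prime decomposition of $X$. Because $X$ is irreducible with toroidal boundary and is not one of the exceptional manifolds $S^3$, $S^2\times S^1$, $\RP^3$, $L(3,1)$, or the 3-ball, exactly one output piece is a 0-efficient triangulation $\calT_{JR}$ of $X$, the remaining pieces being 3-spheres that can be discarded. The Jaco-Rubinstein vertex theorem then places all vertices of $\calT_{JR}$ on $\bdy X$, exactly one per component, matching the boundary components of $\Mmu$. For the tetrahedron count, I would revisit the local analysis in the proof of \refthm{crushing}: for each tetrahedron $\Delta$ of the input, the collection $C_\Delta$ contributes at most one tetrahedron after flattening---either a single central tetrahedron when $\Delta\cap S$ has no quads, or none when it has quads (in which case $C_\Delta$ consists entirely of footballs and triangular purses, all of which are flattened away). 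Iterating over the finitely many normal surfaces crushed by the algorithm yields $|\calT_{JR}| \leq |\calT_B| \leq \vol(M)/v(\mu,d)$. The main obstacle is the topological verification for $X$; once that is in hand, the remainder of the argument is a direct application of previously established results.
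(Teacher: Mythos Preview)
Your proposal is correct and follows essentially the same approach as the paper: start from $\calT_B$, apply the Jaco--Rubinstein crushing procedure, argue that the topology of $X$ prevents any nontrivial change under crushing, and conclude with the vertex-structure theorem for 0-efficient triangulations together with the observation that crushing never increases the number of tetrahedra. The only cosmetic difference is that the paper checks the topological side by running through Burton's list of possible changes under crushing and ruling each out via hyperbolicity of the interior, whereas you verify irreducibility and $\partial$-irreducibility of $X$ directly; these amount to the same thing.
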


\begin{proof}
For the proof, we wish to obtain a $0$-efficient triangulation, as in work of Jaco and Rubinstein~\cite{JacoRubinstein:0Eff}, repeated in \refthm{JRCrushing}. To obtain a $0$-efficient triangulation from $\calT_B$, apply the crushing procedure. Crushing does not increase the number of tetrahedra. However, it may affect the topology of the underlying manifold, but only in well-understood ways that are listed in \cite[Theorem~2]{Burton:Crushing}:
\begin{itemize}
\item It may undo connect sums, 
\item cut open along properly embedded discs, 
\item fill a boundary sphere with a 2-ball, or 
\item delete a 3-ball, 3-sphere, $\RR P^3$, $L(3,1)$ or $S^2\times S^1$ component.
\end{itemize}

Since we start with $M$ hyperbolic, the interior of the manifold $\Mmu$ is also hyperbolic. Hence there are no connect sums, no boundary spheres, and no non-hyperbolic components. There are also no essential discs; thus if we cut along a properly embedded disc, the disc will cut off a ball, which we may ignore. 

Thus repeatedly applying the crushing move gives a $0$-efficient triangulation, which has the properties required by the lemma. 
\end{proof}

\begin{lemma}\label{Lem:FillingAndTreewidth}
  If $\calT$ is a triangulation of a 3-manifold $M$ with a torus boundary component $S$ such that $S$ inherits a one-vertex triangulation from $\calT$, then any Dehn filling of $M$ along $S$ can be given a triangulation with carving-width at most 
  \[
  	\max \{ \cng(\calT) + 1, 4 \}.
  \]
\end{lemma}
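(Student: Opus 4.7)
The plan is to realise the Dehn filling combinatorially by attaching a suitable layered solid torus along $S$, then bound the carving-width via \reflem{cngconnecting}.

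Since $S$ is a torus with a one-vertex triangulation, an Euler characteristic computation shows that $S$ is tiled by exactly two triangles and three edges. By Example~\ref{Example:LST} and \cite[Theorem~4.1]{JacoSedgwick}, for any prescribed Dehn-filling slope there is a layered solid torus whose meridian is that slope. Using that any two one-vertex triangulations of the torus are related by diagonal flips, and that each such flip is realised by layering an additional tetrahedron on the boundary of an LST (without altering the interior topology or the meridian), I can produce an LST $\calT_{LST}$ whose boundary triangulation exactly matches the triangulation induced by $\calT$ on $S$, and whose meridian is the chosen filling slope. Since layering only extends the daisy chain, \reflem{DaisyChain} (together with the trivial cases of $1$ or $2$ tetrahedra) gives $\cng(\calT_{LST}) \leq 2$.

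Let $\calT'$ be the triangulation of the Dehn-filled manifold obtained by identifying the boundaries of $\calT$ and $\calT_{LST}$ along $S$. Its dual graph is built from the dual graphs of $\calT$ and $\calT_{LST}$ by adding two arcs, corresponding to the two face identifications across $S$. Both new arcs are incident to the single ``top'' node of the daisy chain of $\calT_{LST}$ (the node carrying the two boundary faces), while their other endpoints lie on the nodes dual to the tetrahedra of $\calT$ meeting $S$. If those two boundary faces of $\calT$ at $S$ lie in the same tetrahedron, then the two new arcs form a double arc, so $m=1$ in \reflem{cngconnecting}, giving
\[
\cng(\calT') \leq \max\{\cng(\calT),\, \cng(\calT_{LST}),\, 4\} \leq \max\{\cng(\calT),\, 4\}.
\]
If they lie in distinct tetrahedra then $m=2$, giving
\[
\cng(\calT') \leq \max\{\cng(\calT)+1,\, \cng(\calT_{LST})+1,\, 4\} \leq \max\{\cng(\calT)+1,\, 4\}.
\]
In either case the claimed bound holds, using that the maximum degree in the dual graph of a triangulation is at most $4$.

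The main technical subtlety is the combinatorial matching asserted in the second paragraph: one must ensure that the LST can simultaneously realise the prescribed meridian and carry exactly the boundary triangulation that $\calT$ induces on $S$. This relies on the standard fact that layering on the boundary realises diagonal flips and thereby connects any two one-vertex triangulations of the torus, with an appropriate check in the edge case where the filling slope coincides with one of the three edge slopes of the triangulation induced on $S$; in that degenerate case a single-tetrahedron LST already suffices, and the same bound holds a fortiori.
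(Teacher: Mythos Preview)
Your proof is correct and follows essentially the same approach as the paper: attach a layered solid torus realising the given slope, use \reflem{DaisyChain} to bound its carving-width by $2$, and then apply \reflem{cngconnecting} with $m\leq 2$. Your treatment is in fact slightly more careful than the paper's, in that you separate the cases $m=1$ and $m=2$ and make explicit the boundary-matching step via diagonal flips; the paper absorbs both of these into the citation of \cite{JacoRubinstein:LayeredTriang} and a single application of \reflem{cngconnecting}.
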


\begin{proof}
First, note that the only one-vertex triangulation of the torus $S$ has two triangles, so the boundary component of $\calT$ corresponding to $S$ has two triangles, which are incident to at most two distinct tetrahedra. 

The Dehn filling can be obtained by attaching a layered solid torus, as in Example~\ref{Example:LST}, with gluing graph a simple daisy chain, with one loop arc with both endpoints on the node corresponding to the tetrahedron in the centre, and pairs of parallel arcs between nodes corresponding to the layers of tetrahedra. The layering is determined by the slope of the Dehn filling. After a finite number of steps, the meridian of the layered solid torus will correspond to the desired slope; see \cite{JacoRubinstein:LayeredTriang}. At this stage, the final pair of faces is attached to the tetrahedra in $M$ that form the torus boundary.

Because the triangulation of $M$ and the one of the layered solid torus are glued along two faces of at most two distinct tetrahedra, 
the carving-width after gluing is at most the maximum of the carving-width between the two triangulations plus 1, or the maximal degree in the dual graph after filling, which is 4, by \reflem{cngconnecting}. Because a daisy chain graph has carving-width at most two, by \reflem{DaisyChain}, the result follows.
\end{proof}

\begin{theorem}\label{Thm:TWandVol}
  Let $\mu$ be a 3-dimensional Margulis constant and fix $d>0$.
  Let $M$ be a closed hyperbolic 3-manifold with volume $\vol(M)$.
  Then the carving-width of $M$ is bounded above by $6 \cdot \vol(M)/v(\mu,d) = O(\vol(M))$. 
\end{theorem}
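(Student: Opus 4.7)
The plan is to construct an explicit triangulation of $M$ by triangulating the truncated thick part efficiently, then attaching a layered solid torus at each torus boundary, tracking carving-width throughout.

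First, apply \refthm{MinVolTet} to obtain a triangulation $\calT_B$ of $X := N_d(\Mmu)$ with at most $n := \vol(M)/v(\mu,d)$ tetrahedra. Use \reflem{0Efficient} to crush $\calT_B$ into a $0$-efficient triangulation $\calT_{JR}$ of $X$ with at most $n$ tetrahedra and exactly one vertex on each torus boundary. By \refcor{carving-width} this crushing does not increase carving-width, and since the dual graph of $\calT_{JR}$ has $n$ nodes of degree at most four, a trivial tree embedding bounds $\cng(\calT_{JR})$ by the number of distinct dual arcs, hence by $2n$.

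Since $M$ is closed and $\mu$ is a Margulis constant, the $\mu$-thin part of $M$ contains no cusps and is a disjoint union of solid-torus neighbourhoods of short geodesics, so $M$ is obtained from $X$ by Dehn filling each torus boundary along the meridian of the corresponding Margulis tube. In the one-vertex triangulation of a torus there are exactly two triangles, so counting boundary triangles against the $4n$ face-slots of $\calT_{JR}$ bounds the number $k$ of torus boundaries by $k \leq 2n$. For each torus, attach a layered solid torus realising the correct meridian slope (as in Example~\ref{Example:LST}, available by~\cite{JacoSedgwick}), and apply \reflem{FillingAndTreewidth} once per filling. Each application raises the carving-width by at most one (or brings it up to four if it was smaller); crucially, each filling consumes only the two boundary triangles of a single torus, so the one-vertex triangulations of the remaining torus boundaries are left untouched and the hypothesis of \reflem{FillingAndTreewidth} remains valid at every step. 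After all $k \leq 2n$ fillings, the carving-width is at most $\cng(\calT_{JR}) + k \leq 4n = O(\vol(M))$, and tracking the explicit constants in each elementary estimate yields the claimed bound $6\vol(M)/v(\mu,d)$.

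The main bookkeeping concern is to confirm that the hypotheses of \reflem{FillingAndTreewidth} persist under iteration, and that the number of fillings is a constant multiple of the volume; both follow from the disjointness of distinct torus boundaries and the trivial face-count above, so no genuine geometric obstacle remains beyond assembling the already-established ingredients.
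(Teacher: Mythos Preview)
Your proof is correct and follows essentially the same route as the paper: triangulate $X=N_d(\Mmu)$ via \refthm{MinVolTet}, pass to the one-vertex-per-boundary triangulation $\calT_{JR}$ of \reflem{0Efficient}, then fill each torus with a layered solid torus using \reflem{FillingAndTreewidth} and count. The only differences are bookkeeping: the paper bounds $\cng(\calT_B)\le 4n$ and then invokes \refcor{carving-width} to transfer this to $\calT_{JR}$, whereas you bound $\cng(\calT_{JR})\le 2n$ directly from the edge count (making your citation of \refcor{carving-width} superfluous); consequently your arithmetic actually gives $4n$ rather than the stated $6n$, so your closing sentence slightly undersells your own bound.
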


\begin{proof}
Start with the triangulation $\calT_{JR}$ of $X:= N_d(\Mmu)$. The triangulation $\calT_{JR}$ is obtained in \reflem{0Efficient} by crushing, and by \refcor{carving-width} has carving-width at most the carving-width of triangulation $\calT_B$ in \refthm{MinVolTet}, which is naturally at most $4 \cdot \vol(M)/v(\mu,d) = O(\vol(M))$.

We obtain $M$ from $X$ by Dehn filling the torus boundary components. Each boundary component inherits a triangulation from $\calT_{JR}$ with exactly one vertex and two triangles, and so there are at most $2 \cdot \vol(M)/v(\mu,d)$ boundary components. Consequently, performing the Dehn fillings increases the carving-width by at most $2 \cdot \vol(M)/v(\mu,d)$, by \reflem{FillingAndTreewidth}. 
\end{proof}

By \refthm{boundcngtw}, this result also applies to treewidth.

\subsection{Algorithm and computational complexity}\label{Sec:Algorithm}

Our approach to constructing a triangulation with carving-width $O(\vol(M))$ for a closed hyperbolic 3-manifold $M$ is algorithmic. Given $M$ presented by its thick part and hyperbolic Dehn fillings, and an oracle to access its geometry, one can compute the $(1,D)$-net of \reflem{mesh} with $O(\poly(\vol(M)))$ calls to the oracle, where $\poly$ is a polynomial function. The procedure to compute Voronoi cells is polynomial in the size of the net, which is $O(\vol(M))$. 
Meshing in polynomial time, while maintaining the Voronoi cells, can be done using an intrinsic version of Delaunay refinement~\cite{edelsbrunner_2001}. 
Kobayashi and Rieck's algorithm to subdivide each cell $V_{i,j}$ into a ball by constructing a 2-complex $K_{i,j}$ requires first building a graph with desirable properties, in \cite[Lemma~4.2]{KobayashiRieck}, which is constructed by adding components of $V_{i,j}\cap\bdy X$ one at a time, and then adding and removing edges that lie in geodesic faces. The number of components of $V_{i,j}\cap \bdy X$ is universally bounded, depending on the universal bound of the genus of $V_{i,j}$, and thus for each $V_{i,j}$ this algorithm runs in constant time. One can then triangulate the balls $V_{i,j}-K_{i,j}$ in constant time, using the fact that the number of faces, edges, and vertices of each ball is universally bounded. This is done for each of the $O(\vol(M))$ cells. 

The algorithm using the crushing procedure in the proof of \reflem{0Efficient} is exponential in $\vol(M)$, the number of tetrahedra. However, the complexity becomes polynomial~\cite{Burton:Crushing} if, instead of $0$-efficient triangulations, we only require the triangulations $\calT_i$ of the decomposition described in \refthm{JRCrushing} to have exactly one vertex per boundary component if bounded, or one vertex triangulation if closed. This is sufficient for the construction of \refthm{TWandVol}.

The filling by Dehn surgery in the proof of \refthm{TWandVol} is linear time in the Dehn filling coefficients (and hence linear in the final number of tetrahedra for the output triangulation). Consequently, the procedure above constructs a triangulation with treewidth $\vol(M)$ for a closed hyperbolic 3-manifold $M$ in time:
\[
O\left( \poly(\vol(M)) \cdot \oracle + \poly(\vol(M)) + n\right)
\]
and $O(n)$ memory, where $\poly$ denotes polynomial functions, $\oracle$ is the time complexity for calling the geometry oracle, 
and $n$ is the number of tetrahedra of the output manifold. Note that, by construction, we also get a tree-decomposition of width $O(\vol(M))$.

\section{Treewidth does not bound volume}
\label{Sec:cst_tw}

In this section, we prove there exist families of manifolds with constant treewidth but unbounded volume. Our examples include both manifolds with boundary and closed manifolds.

The manifolds with boundary that we consider are the exteriors of 2-bridge knots. There are many ways to describe 2-bridge knots; see for example \cite{BurdeZieschang}. For the purpose of this paper, a 2-bridge knot $K[a_{n-1}, \ldots, a_1]$ is described by a finite collection of integers $a_1, \dots, a_{n-1}$. The diagram of the knot $K[a_{n-1}, \ldots, a_1]$ consists of $n-1$ twist regions arranged linearly, with the $i$-th region containing $|a_i|$ crossings, and the direction of the crossing determined by the sign of $a_i$, and twist regions connected as shown in \reffig{twobridgeknot}. In general, we may always assume that either all the $a_i$ are positive or all negative, and $|a_1|$ and $|a_{n-1}|$ are at least $2$. 

\begin{figure}
  \centering
  \import{figures/}{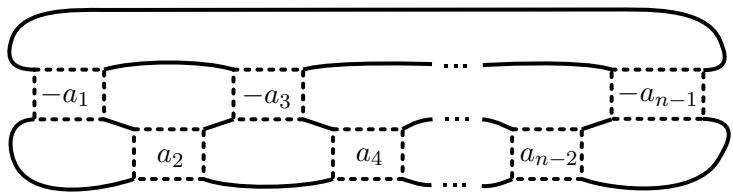}
  \caption{The diagram of $K[a_{n-1}, \ldots , a_1]$, for $n$ even. The box labelled $\pm a_i$ denotes a horizontal twist region with $|a_i|$ crossings, with the sign of all crossings equal to the sign of $\pm a_i$. The crossing number is $C = |a_{n-1}| + \ldots + |a_1|$. }
  \label{Fig:twobridgeknot}
\end{figure}

\begin{proposition}\label{Prop:CuspedBddTW}
The family of hyperbolic 2-bridge knots has unbounded volume but treewidth bounded by a constant. This is true whether we take treewidth corresponding to an ideal triangulation or corresponding to a finite triangulation of the knot exterior. 
\end{proposition}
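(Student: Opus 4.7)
The plan is to exhibit, for every hyperbolic 2-bridge knot $K=K[a_{n-1},\ldots,a_1]$, an ideal triangulation of $S^3\setminus K$ whose dual graph has uniformly bounded treewidth, and to verify separately that the family contains manifolds of arbitrarily large volume. For the triangulation, I would use the Sakuma--Weeks layered ideal triangulation of 2-bridge knot complements (whose geometricity was established by Gu\'eritaud--Futer). It consists of an ordered sequence of ideal tetrahedra $\Delta_1,\ldots,\Delta_k$, with $k$ linear in the crossing number $C=\sum|a_i|$, attached layer by layer so that each $\Delta_j$ is glued to $\Delta_{j-1}$ and $\Delta_{j+1}$ along a pair of ideal faces, with additional small gluings only at the two ends $\Delta_1$ and $\Delta_k$ (corresponding to the two bridges of $K$).

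Next I would analyse the dual graph of this triangulation. Away from the endpoints it is exactly the daisy chain graph of Example~\ref{Example:LST}, which has carving-width at most two by \reflem{DaisyChain}, while near each end there are only finitely many extra arcs coming from the bridge gluings, contributing a decoration of constant combinatorial complexity. Combining a carving of the daisy chain with the trivial carvings of the two end pieces via \reflem{cngconnecting} yields a uniform upper bound on the carving-width of the whole dual graph, independent of $n$ and the $a_i$. Passing from carving-width to treewidth via \refthm{boundcngtw} then gives a uniform bound on treewidth for the ideal triangulation.

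For unbounded volume, I would invoke the J{\o}rgensen--Thurston finiteness theorem: only finitely many complete hyperbolic 3-manifolds have volume below any fixed constant. Since there are infinitely many distinct hyperbolic 2-bridge knot complements (for instance, $K[2,2,\ldots,2]$ with $n$ entries yields pairwise non-homeomorphic hyperbolic 3-manifolds as $n$ grows), the supremum of volumes in the family is infinite. As a more quantitative alternative, one may appeal to the explicit lower bounds on $\vol(S^3\setminus K)$ in terms of the twist number of the diagram due to Lackenby, or to the two-sided volume bounds of Futer--Kalfagianni--Purcell, both of which force $\vol(S^3\setminus K[a_{n-1},\ldots,a_1])$ to infinity as $n\to\infty$.

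Finally, for the compact-triangulation statement, I would convert the ideal triangulation into a finite one by truncating each ideal vertex and subdividing the resulting truncated tetrahedron into a bounded number of ordinary tetrahedra. Each ideal tetrahedron is thereby replaced by $O(1)$ compact tetrahedra, and the new dual graph retains the coarse linear structure of the old one, with every original node blown up to a bounded cluster. Using the immersion framework of Section~\ref{Sec:Treewidth}, one checks that the carving-width changes by at most a universal multiplicative constant. The main obstacle is precisely this last bookkeeping step: carefully verifying that truncation and subdivision, together with the bridge gluings at the two ends, preserve the path-like carving structure. This should reduce to a routine local check at each original tetrahedron and at the two endpoint configurations of the layered triangulation.
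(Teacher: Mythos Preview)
Your overall strategy matches the paper's: use the Sakuma--Weeks layered triangulation, observe that its dual graph has a path-like shape and hence bounded treewidth, cite a volume lower bound growing with the number of twist regions, and handle the finite case by a bounded local subdivision of each ideal tetrahedron. The volume argument and the finite-triangulation argument are fine (the paper uses Gu\'eritaud--Futer's Theorem~B.3 for a bound $\vol\geq 2v_3 n$, and subdivides each truncated tetrahedron into $28$ pieces, but your alternatives are valid).

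There is one concrete error in your description of the triangulation and its dual graph. The Sakuma--Weeks triangulation does not consist of a single chain of tetrahedra $\Delta_1,\ldots,\Delta_k$ with each $\Delta_j$ glued only to $\Delta_{j\pm1}$; rather, there are \emph{two} tetrahedra $T_i^1,T_i^2$ at each level $i=2,\ldots,C-2$ (one in front of the $4$-punctured-sphere pillowcase, one behind), and each face of $T_i^1$ is glued to one of $T_{i-1}^1,T_{i-1}^2,T_{i+1}^1,T_{i+1}^2$, with the two end levels having $T^1$ glued to $T^2$ along two faces. Thus the dual graph is a ``ladder'' of width two, not the daisy chain of Example~\ref{Example:LST}, and \reflem{DaisyChain} does not apply directly. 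The paper bypasses carving-width here and simply writes down an explicit path decomposition with bags $\{T_i^1,T_i^2,T_{i+1}^1,T_{i+1}^2\}$, giving treewidth at most $3$; for the finite triangulation it replaces each $T_i^j$ by its $28$ subdivision tetrahedra in the same bags, giving treewidth at most $111$. Your argument is easily repaired along the same lines once you use the correct two-tetrahedra-per-layer structure, but as written the identification with the daisy chain is incorrect.
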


\begin{proof}
By Theorem~B.3 of \cite{GueritaudFuter:2bridge}, the volume of a 2-bridge knot $K[a_{n-1}, \dots, a_1]$ is bounded below by $2\,v_3\,n$, where $v_3=1.0149\dots$ is a constant. Thus letting $n$ approach infinity gives a sequence of 2-bridge knots with volume approaching infinity.

To show that these knots have bounded treewidth, we need to describe a triangulation of the knot complements. We use the well known triangulation of 2-bridge knot complements due to Sakuma and Weeks \cite{SakumaWeeks}; see also Gu{\'e}ritaud and Futer \cite{GueritaudFuter:2bridge}. We will review the description of ideal triangulation briefly here. More details can be found in the two previous references, or in \cite{Purcell:book}.

For ease of exposition, we will only work with examples for which $n$ is even, each $a_i>0$, and $a_1$ and $a_{n-1}$ are both at least $2$. While a similar argument works when $n$ is odd or when the values $a_i$ are all negative, we will not need it for our purposes here. 

The easiest way to describe the triangulation is to start with the diagram of $K[a_{n-1}, \dots, a_1]$ as in \reffig{twobridgeknot}, and then isotope all the odd twist regions to be vertical, as in \reffig{tangle}.

\begin{figure}
  \centering
  \import{figures/}{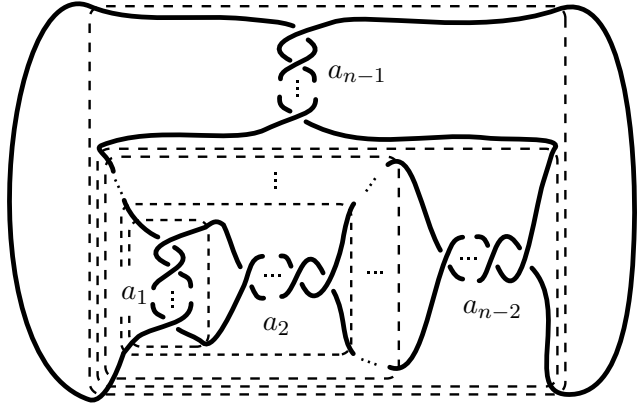}
  \caption{Another diagram of a 2-bridge knot.}
  \label{Fig:tangle}
\end{figure}

With the diagram in the form of \reffig{tangle}, we may think of the crossings as nested, with the first crossing of the twist region corresponding to the first crossing of $a_1$ on the inside, and the last crossing corresponding to the last crossing of $a_{n-1}$ on the outside. Recall that there are $C = a_1+\dots +a_{n-1}$ crossings in total. The complement of the knot is built of the following:
\begin{enumerate}
\item A tangle containing the very first crossing on the inside.
\item For the $i$-th crossing, $i=2, \dots, C-1$, a block homeomorphic to $S\times I$ where $S$ is a 4-punctured sphere. The block contains a single horizontal or vertical crossing, depending on whether the $i$-th crossing is horizontal or vertical. See \reffig{4PunctSphereBlocks}. It is stacked onto the previous crossing.
\item A tangle containing the very last crossing on the outside.
\end{enumerate}

\begin{figure}
  \centering
  \includegraphics{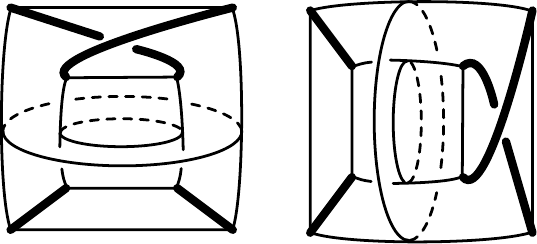}
  \caption{Vertical (left) and horizontal (right) blocks of the form $S\times I$. The 4-punctured spheres on the outside and inside correspond to $S\times\{1\}$ and $S\times\{0\}$, respectively. Figure from \cite{Purcell:book}.}
  \label{Fig:4PunctSphereBlocks}
\end{figure}

There are $2(C-2)$ tetrahedra in the decomposition, which we now describe. First, ignore the innermost and outermost tangles containing a single crossing. For the $i$-th crossing, $i=2, \dots, C-2$, there is a pair of tetrahedra lying between the $i$-th and $(i+1)$-st blocks $S\times I$. The ideal edges of these tetrahedra are edges that are either horizontal or vertical on one of the surfaces $S\times\{0\}$ and $S\times\{1\}$, for the $i$-th or $(i+1)$-st block. When we isotope all these edges to lie between the two blocks, we see the form of the two tetrahedra, as in \reffig{Pillowcase}, left. One tetrahedron, denoted $T_i^1$ lies in front of the $i$-th block, and one, denoted $T_i^2$, lies behind.

\begin{figure}
  \centering
  \includegraphics{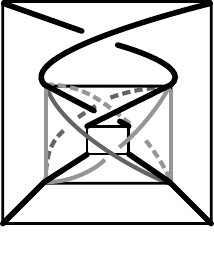}
  \hspace{.75in}
  \import{figures/}{TetrFaces.pdf_tex}
  \caption{On the far left, the edges of the tetrahedron are shown. Middle: two faces of the tetrahedron $T_i^1$ lying on the surface $S\times\{0\}\subset S\times I$ for the $(i+1)$-st block. Right: Position of those two faces when isotoped to $S\times\{1\}$ on the $(i+1)$-st block. Figure from \cite{Purcell:book}.}
  \label{Fig:Pillowcase}
\end{figure}

We need to determine how the faces of the tetrahedra glue. Note that the faces on the outside are isotopic through the outside block to triangles on the outside block. Consider the two outer faces of $T_i^1$ on $S\times\{0\}$ on the $(i+1)$-st block. These two faces are shown in \reffig{Pillowcase} for the example that the crossing is vertical. When we isotope through the $(i+1)$-st block, the triangles isotope to the triangles shown on the right of \reffig{Pillowcase}. Note that one lies in front, and one lies in back. The one in front will be glued to an inside face of $T_{i+1}^1$, and the one in the back will be glued to an inside face of $T_{i+1}^2$. Similarly, if the crossing in the $(i+1)$-st block is horizontal, triangle faces of $T_i^1$ on the outside isotope to two triangles, one on the front and one on the back of $S\times\{1\}$ in the $(i+1)$-st block. Thus one will be glued to $T_{i+1}^1$ and the other to $T_{i+1}^2$.

An identical argument, isotoping in the other direction, then implies that one of the inside faces of $T_i^1$ is glued to a face of $T_{i-1}^1$ and the other inside face is glued to $T_{i-1}^2$. 
 
To complete the dual graph of the triangulation, we must determine how inner faces of $T_2^1$ and $T_2^2$ glue when we insert the innermost tangle containing a single crossing. As described in \cite{GueritaudFuter:2bridge}, inserting this tangle glues the two faces on the block $S\times I$ containing the second crossing to each other. The two triangles in the front of $S\times\{0\}$ are glued, and the two triangles in the back of $S\times\{0\}$ are glued. However, recall that we view tetrahedra $T_2^1$ and $T_2^2$ as lying between the 2nd and 3rd blocks. As in \reffig{Pillowcase}, right, isotoping the inner faces of $T_2^1$ and $T_2^2$ through the block $S\times I$ to $S\times\{0\}$ puts the two faces of $T_2^1$ on opposite sides of $S\times\{0\}$. Thus when we attach the tangle containing the first crossing, we glue each inner face of $T_2^1$ to an inner face of $T_2^2$. A similar argument shows each outer face of $T_{C-2}^1$ is glued to an outer face of $T_{C-2}^2$.

Thus the graph dual to the triangulation of the 2-bridge knot complement has the form shown in \reffig{2BridgeDual}.

\begin{figure}
  \centering
  \import{figures/}{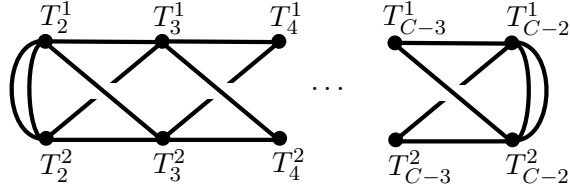}
  \caption{The form of the dual graph to a 2-bridge knot triangulation.}
  \label{Fig:2BridgeDual}
\end{figure}

A tree decomposition of the graph of \reffig{2BridgeDual} is shown in \reffig{2BridgeTreeDecomp}. There are $C-4$ bags in the tree decomposition. Each bag contains exactly four nodes, namely $T_i^1$, $T_i^2$, $T_{i+1}^1$, and $T_{i+1}^2$ for $i=2, 3, \dots, C-3$. Thus the treewidth of this tree decomposition is $4-1=3$. Note it is constant, independent of the values of $a_i$ and the number of crossings $C$ and twist regions $n$.

\begin{figure}
  \centering
  \import{figures/}{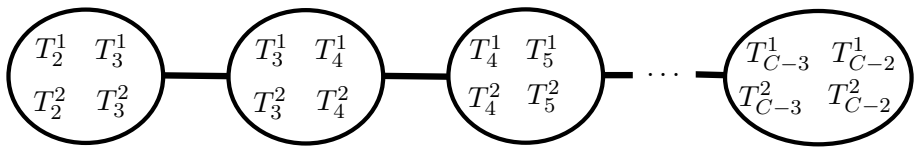}
  \caption{A tree decomposition of an ideal triangulation of a 2-bridge knot.}
  \label{Fig:2BridgeTreeDecomp}
\end{figure}

Finally, the above argument holds for ideal triangulations. It might be preferable to work with finite triangulations, i.e., tetrahedra with only finite vertices and not ideal vertices. We can modify the above decomposition into a finite triangulation by first, truncating ideal vertices. This turns an ideal tetrahedron into a polyhedron with four triangular faces (from truncating) and four hexagonal faces (one for each face of the previous ideal tetrahedron). Add a finite vertex to the centre of each hexagonal face and cone to obtain six triangles. Then add a finite vertex to the centre of the polyhedron and cone to all the faces. The result is a subdivision of the ideal tetrahedron into 28 finite tetrahedra.

To obtain a tree decomposition of the finite triangulation, take the same tree decomposition as in \reffig{2BridgeTreeDecomp}. Replace $T_i^j$ with the 28 finite tetrahedra in the subdivision of $T_i^j$. This gives a tree decomposition, although it is likely not optimal. However, the size of each bag in the decomposition is $28*4 = 112$. Thus the treewidth of this decomposition is $111$, which is constant, independent of $C$ and $n$.
\end{proof}

\smallskip

\begin{remark}~\label{Rem:twknotcomplement}
It is folklore that given a link diagram of treewidth $k$ (seen as a 4-valent graph), one can construct an ideal triangulation of its link exterior with treewidth $O(k)$. More explicitly, this can be done using {\tt SnapPy}'s link complement triangulation algorithm~\cite{snappy}. Specifically, {\tt SnapPy}'s procedure first constructs a cell complex with four cells per crossing in the diagram, whose dual graph connects the four cells around a crossing in a square, and whose arcs run along the link diagram otherwise; this graph has treewidth at most $4$ times the treewidth of the diagram. The procedure concludes by contracting the cell complex along the link diagram, which induces \emph{arc contractions} in the dual graph that can only reduce the treewidth~\cite{robertson86-algorithmic}, and by adding a constant number of cells (increasing the treewidth by a constant) to get an ideal triangulation. 

However, the construction of \refprop{CuspedBddTW} above gives a more natural triangulation with a smaller treewidth. 
\end{remark}

We have found manifolds with boundary with unbounded volume and bounded treewidth. We wish to find closed examples. To do so, we will perform Dehn filling on the 2-bridge knots from above, by attaching a layered solid torus. However, we need to ensure that there is a triangulation of the manifold with only one vertex on the boundary, with bounded treewidth.

\begin{theorem}\label{Thm:BddTW}
There exists a sequence of closed hyperbolic manifolds $M_n$ with bounded treewidth and unbounded volume. 
\end{theorem}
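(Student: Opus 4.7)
The plan is to obtain closed hyperbolic examples by Dehn filling the cusped $2$-bridge knot exteriors produced in \refprop{CuspedBddTW}. Let $K_n = K[a_{n-1},\ldots,a_1]$ be a sequence of hyperbolic $2$-bridge knots with $\vol(K_n) \geq 2 v_3 \cdot n$, and let $\calT_n$ denote the finite triangulations of their exteriors from that proposition, which have treewidth at most a universal constant $K_0$.

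The first step is to convert each $\calT_n$ into a triangulation $\calT_n^*$ of the knot exterior having exactly one vertex on the boundary torus, without breaking the treewidth bound. I apply the Jaco--Rubinstein crushing procedure (\refthm{JRCrushing}) to $\calT_n$. Since each $K_n$ is hyperbolic, its exterior is irreducible and $\partial$-irreducible, so none of the Jaco--Rubinstein exceptional pieces can arise and the output is a single $0$-efficient triangulation $\calT_n^*$ of the same manifold, with exactly one vertex on its boundary component by Jaco--Rubinstein's structure theorem. By \refcor{carving-width} the carving-width does not increase, and by \refthm{boundcngtw} the treewidth of $\calT_n^*$ is still bounded by a universal constant (at most $6(K_0+1)-1$).

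Next, I choose a Dehn surgery slope $s_n$ on the boundary torus and close up $\calT_n^*$ by attaching a layered solid torus whose meridian realizes $s_n$, as in \reflem{FillingAndTreewidth}. The resulting triangulation of $M_n := K_n(s_n)$ has carving-width at most $\max\{\cng(\calT_n^*)+1,\,4\}$, hence universally bounded, and by \refthm{boundcngtw} bounded treewidth as well. The slope $s_n$ is chosen using Thurston's hyperbolic Dehn surgery theorem: for each $n$, all but finitely many slopes yield hyperbolic fillings, whose volumes converge to $\vol(K_n)$ as the slope gets arbitrarily long. Picking $s_n$ long enough that $M_n$ is hyperbolic with $\vol(M_n) \geq \vol(K_n) - 1$ produces the required sequence, since $\vol(M_n) \geq 2v_3 n - 1 \to \infty$.

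The step I expect to be the main obstacle is bridging between the natural Sakuma--Weeks triangulations (whose boundary triangulation has many vertices after truncation) and the one-vertex-on-the-boundary hypothesis of \reflem{FillingAndTreewidth}. Jaco--Rubinstein's structure theorem handles this cleanly thanks to the irreducibility and $\partial$-irreducibility furnished by hyperbolicity, and the carving-width--invariance of crushing (\refcor{carving-width}) is precisely the tool that makes the transition cost-free. All constants accumulated along the way (crushing factor from \refcor{CrushingTreewidth}, filling cost from \reflem{FillingAndTreewidth}, carving/treewidth conversion from \refthm{boundcngtw}) are universal and independent of $n$, so the final treewidth bound is a universal constant while volumes diverge.
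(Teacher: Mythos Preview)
Your proposal is correct and follows essentially the same route as the paper: start from the bounded-treewidth 2-bridge triangulations of \refprop{CuspedBddTW}, crush to a $0$-efficient triangulation with one boundary vertex (controlling width via \refcor{carving-width} and \refthm{boundcngtw}), then Dehn fill with a layered solid torus using \reflem{FillingAndTreewidth}. The only cosmetic difference is that the paper cites an effective Dehn-filling volume bound rather than Thurston's convergence statement, but both justifications are standard and yield the same conclusion.
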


\begin{proof}
The sequence will be obtained by Dehn filling 2-bridge knot complements. By Proposition~\ref{Prop:CuspedBddTW}, there exists a sequence of 2-bridge knots with volume approaching infinity but with treewidth bounded by a constant. 

By virtue of \refcor{carving-width} (and \refthm{boundcngtw}), applying the 0-efficiency construction of Jaco and Rubinstein~\cite{JacoRubinstein:0Eff}, any (compact) 2-bridge knot exterior admits a bounded triangulation with constant treewidth and one vertex on each boundary component. 
Now perform a very high Dehn filling on the knot. The volume decreases by a bounded amount; see for example~\cite{FKP:DFVolJP}. Hence the sequence still has volume approaching infinity.

We construct the triangulation of the Dehn filling using \reflem{FillingAndTreewidth}, which increases the treewidth by one, by \reflem{cngconnecting}. Thus the treewidth remains bounded.
\end{proof}

\section*{Acknowledgements}  
CM would like to thank Arnaud de Mesmay and Jonathan Spreer for discussions that led to the simple argument connecting treewidth of link diagrams and link exteriors in \refrem{twknotcomplement}. JP was supported in part by the Australian Research Council. 

\bibliography{biblio}
\bibliographystyle{amsplain}

\end{document}